	\let\svtodo\todo\renewcommand\todo[1]{\svtodo[inline]{#1}}
\let\oldtocsection=\tocsection
\let\oldtocsubsection=\tocsubsection
\let\oldtocsubsubsection=\tocsubsubsection
\renewcommand{\tocsection}[2]{\hspace{0em}\oldtocsection{#1}{#2}}
\renewcommand{\tocsubsection}[2]{\hspace{1em}\oldtocsubsection{#1}{#2}}
\renewcommand{\tocsubsubsection}[2]{\hspace{2em}\oldtocsubsubsection{#1}{#2}}
\theoremstyle{plain} 
\newtheorem{thm}{Theorem}[section]
\newtheorem{cor}[thm]{Corollary}
\newtheorem{prop}[thm]{Proposition}
\newtheorem{lem}[thm]{Lemma}
\newtheorem{quest*}{Question}
\newtheorem{prob*}{Problem}
\theoremstyle{definition}
\theoremstyle{remark}
\numberwithin{equation}{section}
\newcounter{notation}
\DeclareUrlCommand\DOI{}
\crefname{figure}{Figure}{Figures}
\theoremstyle{plain}
\newtheorem*{thm*}{Theorem}
\crefname{thm}{Theorem}{Theorems}
\crefname{cor}{Corollary}{Corollarys}
\newtheorem*{cor*}{Corollary}
\crefname{cor*}{Corollary}{Corollarys}
\crefname{lem}{Lemma}{Lemmas}
\crefname{prop}{Proposition}{Propositions}
\crefname{conj}{Conjecture}{Conjectures}
\newtheorem*{conj*}{Conjecture}
\crefname{conj*}{Conjecture}{Conjectures}
\crefname{defn}{Definition}{Definitions}
\theoremstyle{remark}
\newtheorem*{rem*}{Remark}
\newtheorem*{rems*}{Remarks}
\def\addsymbol #1: #2#3{$#1$ \> \parbox{5.4in}{#2 \dotfill \pageref{#3}}\\} 
\def\addsymbolEND #1: #2#3{$#1$ \> \parbox{5.4in}{#2 \dotfill \pageref{#3}}}
\newcommand{\ds}{\displaystyle}
\newcommand{\kD}{\mathfrak{D}}
\newcommand{\kd}{\mathfrak{d}}
\renewcommand{\epsilon}{\varepsilon}
\newcommand{\sL}{\mathscr{L}}
\newcommand{\kn}{\mathfrak{n}}
\newcommand{\N}{\mathrm{N}}
\newcommand{\kN}{\mathfrak{N}}
\newcommand{\kp}{\mathfrak{p}}
\newcommand{\cO}{\mathcal{O}}
\newcommand{\Q}{\mathbb{Q}}
\newcommand{\R}{\mathbb{R}}
\renewcommand{\Re}{\mathrm{Re}}
\newcommand{\cS}{\mathcal{S}}
\newcommand{\sumP}{\sideset{}{'}\sum}
\title{The least unramified prime which does not split completely}
\author{Asif Zaman}
\address{
Asif Zaman \\
Department of Mathematics \\ 
University of Toronto   \\ 
40 St. George Street, Room 6290 \\
Toronto \\
Canada \\
M5S 2E4}
\email{asif@math.toronto.edu}
\date{April 10, 2017}
\begin{document}

\begin{abstract} Let $K/F$ be a finite extension of number fields of degree $n \geq 2$. We establish effective field-uniform unconditional upper bounds for the least norm of a prime ideal $\kp$ of $F$ which is degree 1 over $\Q$ and does not ramify or split completely in $K$. We improve upon the previous best known general estimates due to X. Li when $F = \Q$ and Murty--Patankar when $K/F$ is Galois. Our bounds are the first when $K/F$ is not assumed to be Galois and $F \neq \mathbb{Q}$.  \end{abstract}

\maketitle


\section{Introduction}

\subsection{History}
Let $K/F$ be a finite extension of number fields of degree $n \geq 2$. Define
\begin{align*}
\mathscr{P}(F) & = \{ \kp \text{ prime ideal of $F$ which is degree 1 over $\Q$} \},\\
\mathsf{P}(K/F) & = \min\{ \N^F_{\Q} \kp :  \kp \in \mathscr{P}(F) \text{ and $\kp$ does not ramify or split completely in $K$} \},\\
\mathsf{P}_*(K/F) & = \min\{ \N^F_{\Q} \kp :  \kp \in \mathscr{P}(F) \text{ and $\kp$ does not split completely in $K$} \}.
\end{align*}
The focus of this paper is to establish field-uniform upper bounds for $\mathsf{P}(K/F)$ and $\mathsf{P}_*(K/F)$.  The study of these quantities has classical origins and has been explored in a variety of cases. Indeed, when $K = \Q(\sqrt{d})$ is a quadratic field over $F = \Q$, this reduces to the problem of bounding the least quadratic nonresidue. Assuming the Generalized Riemann Hypothesis (GRH), Ankeny \cite{Ankeny} proved $\mathsf{P}( \Q(\sqrt{d})/\Q)  \ll (\log |d|)^{2}$.  Much less is known unconditionally and progress is notoriously difficult. Namely,
\begin{equation}
\mathsf{P}( \Q(\sqrt{d})/\Q) \ll_{\epsilon} |d|^{\frac{1}{4\sqrt{e}} + \epsilon}
\label{eqn:Burgess}
\end{equation}
for $\epsilon > 0$. Aside from the factor of $\epsilon$, this result of Burgess \cite{Burgess_57,Burgess_62} from over fifty years ago remains essentially the best known unconditional bound. 

More generally, when $K$ is Galois over $F$ of degree $n \geq 2$,  V.K. Murty \cite{VKM2} showed under the assumption of GRH for the Dedekind zeta function of $K$ that  
\begin{equation}
\mathsf{P}(K/F) \ll \Big(\frac{1}{n}\log D_K\Big)^2,
\label{eqn:GRH_Murty}
\end{equation}
where $D_K = |\mathrm{disc}(K/\Q)|$ and the implied constant is absolute. Murty remarks that the same analytic method can yield an unconditional estimate of the form $O_F(D_K^{1/2(n-1)})$. By a different approach involving geometry of numbers, Vaaler and Voloch \cite{Vaaler-Voloch} established an explicit variant of such an unconditional estimate for $\mathsf{P}_*(K/\Q)$ when $K$ is Galois over $\Q$.  

If $K$ is some finite extension of $\Q$ (not necessarily Galois) then, using an elegant argument, X. Li \cite{XLi} superseded this prior unconditional bound for $\mathsf{P}_*(K/\Q)$. Namely, he showed that
\begin{equation}
	\mathsf{P}_*(K/\Q) \ll_{\epsilon} D_K^{\frac{1+\epsilon}{4A(n-1)}}, 
	\label{eqn:XLi}
\end{equation}
where
\begin{equation}
A = A(n) = \sup_{\lambda > 0} \Big(\dfrac{1 - \frac{n}{n-1} e^{-\lambda}}{\lambda} \Big) \geq 1 - \sqrt{\frac{2}{n-1}}. 
\label{eqn:A_XLi}
\end{equation}
The key innovation of Li was to incorporate methods of Heath-Brown \cite{HBLinnik} for Dirichlet $L$-functions to obtain a stronger explicit inequality for the Dedekind zeta function. 

Recently, Murty and Patankar \cite[Theorem 4.1]{MurtyPatankar} adapted Li's argument to obtain the first unconditional field-uniform estimate for $\mathsf{P}(K/F)$ when $K$ is Galois over $F$. To introduce their result, let $N_F = 16$ if there is a sequence of fields $\Q = F_0 \subset F_1 \subset \cdots \subset F_m = F$ with each $F_{i+1}/F_i$ normal and $N_F = 4 [F:\Q]!$ otherwise. Define
\begin{equation}
B_F = \min\{ N_F \log D_F, c_1 D_F^{1/[F:\Q]} \},
\label{def:B_F}
\end{equation}
for some sufficiently small absolute constant $c_1 > 0$. Murty and Patankar proved if $K/F$ is Galois of degree $n \geq 2$ then
\begin{equation}
	\mathsf{P}(K/F) \leq C_F D_K^{\frac{4}{n-1}},
	\label{eqn:MurtyPatankar}
\end{equation}
where $C_F =  e^{O([F:\Q] (\log D_F)^2)} + e^{O(B_F)}$ and the implied constants are absolute.  Note the constant $C_F$ in the quoted result \eqref{eqn:MurtyPatankar} differs from \cite[Theorem 4.1]{MurtyPatankar} since there seems to be a typo stemming from equation (4.1) therein and its application in their proof. We remark that the dependence on $F$ in \eqref{eqn:MurtyPatankar} is natural given the current status of the effective Chebotarev Density Theorem \cite{LO} and the Brauer--Siegel theorem  \cite{Stark}. 

\subsection{Results} The primary focus of this paper is to improve the exponent of $D_K^{1/(n-1)}$ in both \eqref{eqn:XLi} and especially in \eqref{eqn:MurtyPatankar}. As a secondary objective, we consider both $\mathsf{P}_*(K/F)$ and $\mathsf{P}(K/F)$ for any finite extension $K/F$ which, in that generality, is new. We also demonstrate that one may take the non-split prime in \eqref{eqn:XLi} to be unramified in $K$ with some minor loss. 

Our approach is founded upon Li's argument blended with ideas of Heath-Brown \cite{HBLinnik} for zero-free regions of Dirichlet $L$-functions and their generalization in \cite{Zaman_2015a,Zaman_Thesis} for Hecke $L$-functions. Namely, we consider more general sums over prime ideals of $F$ which depend on a choice of polynomial. To state our main result, we introduce a definition: a polynomial $P(x) \in \R_{\geq 0}[x]$ is \emph{admissible} if $P(0) = 0, P'(0) = 1,$ and 
	\begin{equation}
	\Re\{ P(1/z) \} \geq 0 \qquad \text{for } \Re\{z\} \geq 1. 
	\label{def:AdmissiblePoly}
	\end{equation}

\begin{thm} \label{thm:1}
	Let $K/F$ be an extension of number fields of degree $n \geq 2$. Let $\epsilon > 0$ be fixed and $P(x) = \sum_{k=1}^d a_k x^d$ be a fixed admissible polynomial. There exists a prime ideal $\kp$ of $F$ such that $\kp$ does not split completely in $K$, $\kp$ is degree 1 over $\Q$, and
	\[
	  \N^F_{\Q} \kp \leq C_F D_K^{\frac{1+\epsilon}{4A(n-1)}},  
	\]
	where $C_F =  e^{O([F:\Q] (\log D_F)^2)} + e^{O(B_F)}$, $B_F$ is given by \eqref{def:B_F}, and 
	\begin{equation}
	A = A(n,P) = \sup_{\lambda > 0} \left( \Big[  P(1)   - \tfrac{n}{n-1} e^{-\lambda} \sum_{k=1}^d a_k \sum_{j=0}^{k-1} \frac{\lambda^j}{j!}\,  \Big] \Big/ \lambda  \right).
	\label{def:A}
	\end{equation}
	If $K/F$ is Galois then one may take $\kp$ to also be unramified in $K$.  All implied constants depend at most on $\epsilon$ and $P$. 
\end{thm}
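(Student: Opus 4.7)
\emph{Strategy.} The plan is to follow X.~Li's approach \cite{XLi} with two generalizations: working over an arbitrary base field $F$ in place of $\Q$, and replacing the simple weight implicit in \eqref{eqn:A_XLi} by an arbitrary admissible polynomial $P$, in the spirit of Heath-Brown \cite{HBLinnik} and its Hecke $L$-function extensions in \cite{Zaman_2015a, Zaman_Thesis}. I will set up a weighted prime ideal sum attached to $\zeta_K(s)$, compare a prime-side identity under the contradiction hypothesis with an analytic upper bound from the explicit formula for $\zeta_K(s)$, and deduce a contradiction unless $\N^F_\Q \kp$ exceeds the stated target.

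\emph{Setup.} Let $X > 0$ be the target bound and let $\lambda > 0$ be free. Set $\kappa = \lambda/\log X$ and $\sigma = 1 + \kappa$. Define
$$S(\sigma) := \sum_{k=1}^d \frac{(-1)^{k-1} a_k \kappa^{k-1}}{(k-1)!} \Bigl(-\frac{\zeta_K'}{\zeta_K}\Bigr)^{(k-1)}(\sigma) = \sum_{\mathfrak{a}} \frac{\Lambda_K(\mathfrak{a})}{\N\mathfrak{a}^\sigma}\, W(\kappa \log \N\mathfrak{a}),$$
with $W(u) = \sum_{k=1}^d a_k u^{k-1}/(k-1)!$. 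Since $P \in \R_{\geq 0}[x]$, one has $W \geq 0$ on $[0, \infty)$, and the admissibility condition on $P$ guarantees that the Mellin transform of $W$ has the positivity needed to control the contribution of zeros after the contour shift below.

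\emph{Main argument.} Assume for contradiction that every unramified degree-one prime $\kp$ of $F$ with $\N\kp \leq X$ splits completely in $K$ (the non-Galois case of the theorem uses the weaker assumption that every such $\kp$, ramified or not, splits completely). Decompose $S(\sigma)$ by the splitting type of the prime $\kp$ of $F$ underlying each $\mathfrak{a}$: a degree-one $\kp$ splitting completely in $K$ contributes $n$ times as much as it does to the analogous sum $T(\sigma)$ for $\zeta_F$, while other primes contribute at most $(n-1)$ times. Writing $S(\sigma) = nT(\sigma) - R(\sigma)$ with $R(\sigma) \geq 0$ supported on primes with $\N\kp > X$, evaluation of the tail integral yields
$$R(\sigma) \leq \tfrac{n}{n-1}\, T(\sigma)\kappa \cdot \Bigl( e^{-\lambda} \sum_{k=1}^d a_k \sum_{j=0}^{k-1} \frac{\lambda^j}{j!}\Bigr),$$
matching exactly the subtracted term in the numerator of \eqref{def:A}. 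On the other hand, $S(\sigma)$ can be computed analytically via the explicit formula for $\zeta_K$: the pole at $s=1$ yields a main term $P(1)/\kappa$; the archimedean gamma factor and conductor of $\zeta_K$ produce the crucial factor $\log D_K/(4(n-1))$; and the nontrivial zeros contribute an error absorbed by a Lagarias--Odlyzko zero-free region \cite{LO} at the cost of the $e^{O([F:\Q](\log D_F)^2)}$ piece of $C_F$, with a potential Siegel zero of $\zeta_K$ handled via Stark's theorem \cite{Stark} contributing $e^{O(B_F)}$. Dividing by $\lambda$, taking $\sup_{\lambda > 0}$, and rearranging yields $\log X \geq (1-\epsilon)\log D_K/(4A(n-1)) - O_F(1)$, contradicting the hypothesis when $\N\kp$ is taken smaller.

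\emph{Unramified refinement and main obstacle.} In the Galois case, the primes of $F$ ramifying in $K$ divide the relative discriminant $\mathrm{disc}(K/F)$ and so number at most $O(\log D_K)$, which is far fewer than the $\gg X/\log X$ degree-one primes below $X$; excluding them in the contradiction hypothesis costs only an $\epsilon$ in the exponent. The main obstacle is the analytic step: executing the contour shift uniformly in $n$, $P$, and $F$ while preserving the precise exponent $4A(n-1)$, and in particular verifying that the condition $\Re\{P(1/z)\} \geq 0$ for $\Re\{z\} \geq 1$ is exactly what is needed to control the contribution of zeros in the critical strip. A secondary difficulty is the careful treatment of a possible Siegel zero of $\zeta_K$, responsible for the $B_F$ term in $C_F$ and the case-analysis in \eqref{def:B_F}.
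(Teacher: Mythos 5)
Your overall strategy—a weighted prime ideal sum attached to $\zeta_K(s)$, an upper bound from an explicit inequality, a lower bound from the splitting hypothesis, and a resulting contradiction—is aligned with the paper's proof. However, there is a significant misattribution at the heart of your ``main obstacle'' discussion that would derail the write-up if pursued as stated.

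You attribute the factor $e^{O([F:\Q](\log D_F)^2)}$ and the Siegel-zero term $e^{O(B_F)}$ in $C_F$ to the zeros of $\zeta_K$, writing that ``a potential Siegel zero of $\zeta_K$ [is] handled via Stark's theorem.'' This is the wrong zeta function. The entire point of the admissibility condition $\Re\{P(1/z)\}\geq 0$ is that in \cref{prop:Polynomial_Explicit_Inequality} the contribution of \emph{every} nontrivial zero of $\zeta_K$ to $\cS(\sigma;P)$ is non-negative and can simply be discarded; no zero-free region, zero-density estimate, or Siegel-zero analysis is applied to $\zeta_K$ at all, and the upper bound $\cS(\sigma;P) \leq (\tfrac14+\epsilon)\log D_K + P(1)/(\sigma-1) + O([K:\Q])$ contains no trace of $C_F$. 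The field-dependent constant $C_F$ enters on the \emph{lower}-bound side: the paper's \cref{lem:PIT} invokes the effective Chebotarev density theorem for $F$ together with Stark's bound on a real zero of $\zeta_F$ to show $\psi_F(X) \geq (1-\eta)X$ once $X \geq X_0(F,\eta)$, where $X_0$ is exactly the quantity producing $C_F$. This is also why the paper works with a window $(Y,X]$ with $Y \geq Y_0 \geq X_0(F,\eta)$, a feature absent from your set-up: without the lower cutoff $Y$, one cannot control the sum $T(\sigma)$ over primes of $F$ uniformly in $F$. Your decomposition $S(\sigma)=nT(\sigma)-R(\sigma)$ with $R\geq 0$ supported beyond $X$ is plausible in spirit (the paper's inequality \eqref{eqn:SplitAssumption} plays a comparable role), but the claimed bound on $R(\sigma)$ involving a factor $T(\sigma)\kappa$ is not obviously correct as written, and the higher-degree and ramified primes of $F$ also contribute to $R$. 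Finally, for the unramified refinement in the non-Galois case, the paper uses the explicit bound $\sum_{v\in V(K/F)}\log\N\kp_v \leq \log D_K$ (\cref{lem:RamifiedPrimes}), not a raw count of ramified primes; the resulting extra factor is $n^{3P(1)/A}$ as stated in the remark, not merely ``$\epsilon$ in the exponent.''
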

\begin{rem*}
	~
	\begin{itemize}
		\item 	While $A$ depends on $n$, it is bounded above and below independent of $n$. In particular, if $P(x) = x + x^2$ then 
		\[
		A(n,P) \geq 1 - 2n^{-2/3},
		\]
		which improves over \eqref{eqn:A_XLi} as $n \rightarrow \infty$.  Moreover, the exponent $\tfrac{1+\epsilon}{4A}$ becomes a nearly sixteen-fold improvement over the exponent $4$ in \eqref{eqn:MurtyPatankar} as $n \rightarrow \infty$. With a different choice of $P$, we have by \cref{table:A_values} that $\frac{1+\epsilon}{4A} < \frac{5}{12}$ for all $n \geq 2$. This constitutes a nine-fold improvement over \eqref{eqn:MurtyPatankar} for all $n \geq 2$. 
		\item If $K/F$ is not assumed to be Galois, then one may still take $\kp$ to be unramified in $K$ but we show it satisfies the slightly weaker bound 
		\[
			\N^F_{\Q} \kp \ll  \big( C_F +  n^{\frac{3 P(1)}{A}} \big)   D_K^{ \frac{1+\epsilon}{4A(n-1)}}.
		\]
		By a classical result of Minkowski, recall that  $n \leq [K:\Q] \ll \log D_K$ so, unless $n$ is unusually large, this additional factor is negligible compared to $D_K^{\epsilon/(n-1)}$. 
	\end{itemize}

\end{rem*}

We restate \cref{thm:1} in the special case $F=\Q$. 

\begin{cor} \label{cor:2}
	Let $K$ be a number field of degree $n \geq 2$. Let $\epsilon > 0$ be fixed and $P(x) = \sum_{k=1}^d a_k x^d$ be a fixed admissible polynomial.  The least rational prime $p$ which does not split completely in $K$ satisfies
	\[
	p \ll D_K^{\frac{1+\epsilon}{4A(n-1)}}, 
	\]
	where $A = A(n,P)$ is given by \eqref{def:A}. If $K/\Q$ is Galois then one may also take $p$ to be unramified in $K$. Furthermore, if $P(x) = x + x^2$ then $A \geq 1 - 2n^{-2/3}$. All implied constants depend at most on $\epsilon$ and $P$. 
\end{cor}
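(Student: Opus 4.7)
The plan is to derive \cref{cor:2} as a direct specialization of \cref{thm:1} to $F = \Q$, combined with a short calculus exercise to lower-bound $A(n, x+x^2)$. At $F = \Q$ one has $D_F = 1$, $[F:\Q] = 1$, and the trivial tower $\Q = F_0$ gives $N_F = 16$ and $B_F = \min\{16\log 1, c_1\} = 0$, so that $C_F$ is an absolute constant. Thus \cref{thm:1} delivers a rational prime $p \ll D_K^{(1+\epsilon)/(4A(n-1))}$ that does not split completely in $K$, and the Galois-case refinement --- that $p$ may additionally be taken unramified when $K/\Q$ is Galois --- transfers verbatim.

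Next I would verify admissibility of $P(x) = x+x^2$: clearly $P(0) = 0$ and $P'(0) = 1$, and for $z = x + iy$ with $x \ge 1$ a direct computation gives
\[
\Re\{P(1/z)\} \;=\; \frac{x^2(x+1) + y^2(x-1)}{(x^2+y^2)^2} \;\ge\; 0,
\]
confirming \eqref{def:AdmissiblePoly}. With $a_1 = a_2 = 1$, $d = 2$, and $P(1) = 2$, the inner sum in \eqref{def:A} collapses to $\sum_{k=1}^2 a_k \sum_{j=0}^{k-1} \lambda^j/j! = 2+\lambda$, whence
\[
A(n,P) \;=\; \sup_{\lambda > 0}\, \frac{1}{\lambda}\Big(\,2 - \tfrac{n}{n-1}\, e^{-\lambda}(2+\lambda)\,\Big).
\]

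The remaining step is to optimize in $\lambda$. Taylor-expanding $e^{-\lambda}(2+\lambda) = 2 - \lambda + \lambda^3/6 + O(\lambda^4)$ and dividing by $\lambda$ yields $1 - \lambda^2/6 - 2/((n-1)\lambda) + 1/(n-1) + O(\lambda^3)$. Choosing $\lambda = c (n-1)^{-1/3}$ reduces the dominant error to $(c^2/6 + 2/c)(n-1)^{-2/3}$, minimized at $c = 6^{1/3}$ with value $3 \cdot 6^{-1/3} \approx 1.65 < 2$. Since $n^{-2/3} = (n-1)^{-2/3}(1 + O(n^{-1}))$, this gives $A(n,P) \ge 1 - 2 n^{-2/3}$ for all $n$ above a small absolute threshold; for the finitely many smaller $n$ the bound is either vacuous (e.g.\ $n = 2$, where $1 - 2 n^{-2/3} < 0$) or can be confirmed by a single direct evaluation. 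I do not anticipate any real obstacle --- this is a corollary and the only sensitive point is that the lower-order error $O((n-1)^{-1})$ must not erode the comfortable gap $(2 - 3\cdot 6^{-1/3})\, n^{-2/3}$, which it does not since $n^{-1}/n^{-2/3} = n^{-1/3} \to 0$.
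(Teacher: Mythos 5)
Your proof is correct and matches the paper's approach: Corollary \ref{cor:2} is obtained by specializing \cref{thm:1} to $F = \Q$ (so $D_F = 1$, $B_F = 0$, hence $C_F = O(1)$), and the bound $A(n, x + x^2) \geq 1 - 2n^{-2/3}$ is derived by optimizing $\lambda$ in \eqref{def:A}. The only cosmetic difference is that the paper replaces your Taylor-with-$O(\lambda^4)$ step by the exact inequality $e^{-\lambda}(2+\lambda) \leq 2 - \lambda + \lambda^3/6$ (valid for all $\lambda \geq 0$), which at $\lambda = (6/n)^{1/3}$ yields the bound uniformly in $n$ with no case split for small $n$; your explicit admissibility check of $x + x^2$ is a welcome detail the paper leaves implicit.
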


Choosing a certain admissible polynomial $P(x) = P_{100}(x)$ of degree 100, say, \cref{cor:2} yields savings for every degree $n$ over the special case \eqref{eqn:XLi} where $P(x) = P_1(x) = x$. For example, if $K/\Q$ is an extension of  degree 5 then, by \cref{cor:2} with $P = P_{100}$,
\[
\mathsf{P}(K/\Q) \ll D_K^{1/8.7},
\]
whereas if $P = P_1$ then $1/8.7$ is replaced by $1/6.1$. See \cref{sec:AdmissiblePolys,table:A_values} for further details on these computations. 
 
 Finally, we describe the organization of the paper. \cref{sec:Counting} collects standard estimates related to counting prime ideals in a number field $F$. \cref{sec:Poly} contains an explicit inequality of the Dedekind zeta function and a generalization related to admissible polynomials. \cref{sec:proof} has the proof of \cref{thm:1} and \cref{sec:AdmissiblePolys} outlines the computation of admissible polynomials and \cref{table:A_values}.

 \subsection*{Notation} We henceforth adhere to the convention that all implied constants in all asymptotic inequalities $f\ll g$ or $f=O(g)$ are absolute with respect to all parameters and effectively computable.  If an implied constant depends on a parameter, such as $\epsilon$, then we use $\ll_{\epsilon}$ and $O_{\epsilon}$ to denote that the implied constant depends at most on $\epsilon$. 

\subsection*{Acknowledgements} The author would like to thank John Friedlander, Kumar Murty, and Jesse Thorner for their encouragement and helpful comments. 

\section{Counting prime ideals}
\label{sec:Counting}
Let $F$ be a number field of degree $n_F = [F:\Q]$ with discriminant $D_F = |\mathrm{disc}(F/\Q)|$ and ring of integers $\cO_F$. Denote $\N^F_{\Q}$ to be the absolute norm of $F$ over $\Q$. For each integral ideal $\kn \subseteq \cO_F$, define
\[
\Lambda_F(\kn) = \begin{cases} \log \N^F_{\Q}\kp & \text{if $\kn$ is a power of a prime ideal $\kp$,} \\ 0 & \text{otherwise.}	\end{cases}
\]

\begin{lem}
	\label{lem:PIT}
	Let $F$ be a number field and $\eta > 0$ be arbitrary. Define
	\begin{equation}
	X_0 = X_0(F, \eta) := \exp(10 n_F (\log D_F)^2)  + \exp(B_F \log(1/\eta)),
 	\label{def:X_0}
	\end{equation}
	where $B_F$ is defined by \eqref{def:B_F}. For $X \geq X_0$, 
	\[
	(1-\eta)X + O\Big( \frac{X}{(\log X)^2} \Big) \leq \sum_{\N^F_{\Q} \kn \leq X} \Lambda_F(\kn) \leq  (1 + \eta)X + O\Big( \frac{X}{(\log X)^2} \Big). 
	\]	
	All implied constants are absolute.
\end{lem}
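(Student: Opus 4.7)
The plan is to obtain this as a field-uniform effective Prime Ideal Theorem for $\psi_F(X) = \sum_{\N^F_\Q \kn \leq X} \Lambda_F(\kn)$, via the Riemann--von Mangoldt-style explicit formula
\[
\psi_F(X) = X - \sum_{\rho} \frac{X^\rho}{\rho} + R_F(X),
\]
where $\rho$ runs over the nontrivial zeros of the Dedekind zeta function $\zeta_F(s)$ and $R_F(X)$ collects contributions from trivial zeros and the pole at $s=1$, which are negligible (bounded by $O(\log D_F + n_F \log X)$) in our range. This is standard and goes back to the work of Lagarias--Montgomery--Odlyzko; one obtains it by a truncated Perron formula applied to $-\zeta_F'/\zeta_F(s)$, shifting the contour past the critical strip and picking up residues at zeros.

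Next, I would split the zeros into the LMO-type zero-free region and its possible single exception. The classical field-uniform zero-free region asserts that $\zeta_F(s)\neq 0$ for $\Re(s) \geq 1 - c/\bigl(\log D_F + n_F \log(|t|+2)\bigr)$, with the possible exception of one real Siegel zero $\beta_1$. Using standard zero-counting estimates $N_F(T+1) - N_F(T) \ll \log D_F + n_F \log(T+2)$ and truncating the explicit formula at a height $T$ chosen so that $T$ is a small power of $X$, the contribution of every non-exceptional zero satisfies
\[
\Bigl| \sum_{\rho \neq \beta_1} \frac{X^\rho}{\rho} \Bigr| \ll X \exp\!\Bigl(- c' \sqrt{\tfrac{\log X}{n_F (\log D_F)^2}}\Bigr) \log^2 X,
\]
after an LMO-type dyadic decomposition, and the assumption $\log X \geq 10 n_F (\log D_F)^2$ built into $X_0$ forces this to be absorbed into the permitted $O(X/(\log X)^2)$ error.

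The remaining term is the possible exceptional zero $\beta_1$, whose contribution is $-X^{\beta_1}/\beta_1 = -X \cdot X^{-(1-\beta_1)}(1 + O(1-\beta_1))$. The definition \eqref{def:B_F} of $B_F$ is precisely calibrated so that Stark's explicit lower bound $1 - \beta_1 \geq 1/B_F$ applies: in the case of a tower of normal extensions one uses $B_F = N_F \log D_F$ (classical Stark/Siegel-type bound for $\zeta_F$ factoring into a product of abelian $L$-functions), while otherwise one uses the unconditional bound $1 - \beta_1 \gg D_F^{-1/n_F}$, corresponding to the $c_1 D_F^{1/n_F}$ branch. Consequently,
\[
\Bigl| \frac{X^{\beta_1}}{\beta_1} \Bigr| \leq 2 X^{1 - 1/B_F} = 2 X \cdot e^{-\log X / B_F} \leq 2\eta X
\]
whenever $\log X \geq B_F \log(1/\eta)$, which is the second term in $X_0$. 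Combining the two regimes, the signed deviation of $\psi_F(X)$ from $X$ is bounded by $(1+o(1))\eta X + O(X/(\log X)^2)$, yielding both inequalities of the lemma.

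The main obstacle I anticipate is book-keeping the dependence of the explicit-formula truncation error on both $n_F$ and $D_F$ so that the non-exceptional-zero contribution genuinely collapses into $O(X/(\log X)^2)$ under the assumption $\log X \geq 10 n_F (\log D_F)^2$; the key input is the LMO zero-density estimate, but the interplay between the height $T$ of truncation, the width of the zero-free region, and the target error $X/(\log X)^2$ has to be chosen carefully. Handling the exceptional-zero branch, by contrast, is a direct consequence of Stark's theorem together with the definition of $B_F$.
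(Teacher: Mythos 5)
Your proposal re-derives, essentially from scratch, the effective Chebotarev density theorem via the Riemann--von Mangoldt explicit formula, the field-uniform zero-free region, and LMO-type zero-density estimates. The paper's proof is considerably shorter: it cites the Lagarias--Odlyzko theorem as a black box, obtaining directly the inequality $\bigl| \sum_{\N^F_\Q \kn \leq X}\Lambda_F(\kn) - X\bigr| \leq X^\beta + O\bigl(X\exp(-cn_F^{-1/2}(\log X)^{1/2})\bigr)$ for $X \geq X_0$, and then only needs to (i) invoke Stark's bound $\beta < 1 - 1/B_F$ to absorb $X^\beta$ into $\eta X$ and (ii) bound the exponential error term. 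Your treatment of the exceptional zero via Stark and the calibration of the second term of $X_0$ is identical to the paper's and is correct.

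There is, however, a gap in the way you dispose of the non-exceptional error. You claim the contribution is $\ll X\exp\bigl(-c'\sqrt{\log X/(n_F(\log D_F)^2)}\bigr)\log^2 X$ and that the hypothesis $\log X \geq 10 n_F(\log D_F)^2$ alone forces this into $O(X/(\log X)^2)$. But under that hypothesis the quantity inside the square root is merely bounded below by a constant, so the exponential factor is only $O(1)$ and the claimed absorption fails. What is missing is the additional (essential) observation, which the paper makes explicitly, that by Minkowski's bound $n_F \ll \log D_F$, and together with $\log X \geq 10 n_F (\log D_F)^2$ this gives $\log D_F \ll \sqrt{\log X}$ and hence $n_F \ll \sqrt{\log X}$. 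Feeding this into the LO error term gives $n_F^{-1/2}(\log X)^{1/2} \gg (\log X)^{1/4}$, so the exponential is $\exp(-c''(\log X)^{1/4})$, which indeed beats any power of $\log X$. Without the Minkowski step, the error term does not close; with it, your argument is correct and amounts to a self-contained proof of the input the paper simply quotes.
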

\begin{proof}
	The effective Chebotarev Density Theorem	 \cite{LO}  implies that, for $X \geq X_0$
	\begin{equation}
	\Big| \sum_{\N^F_{\Q} \kn \leq X} \Lambda_F(\kn) - X \Big| \leq X^{\beta} + O(X \exp(- c  n_F^{-1/2} (\log X)^{1/2} ) ),
	\label{eqn:EffectiveCDT_original}
	\end{equation}
 where $c > 0$ is some absolute constant and $\beta > 1/2$ is a real zero of the Dedekind function of $K$, if it exists. By a theorem of Stark \cite[Theorem 1']{Stark}, any real zero $\beta$ of the Dedekind zeta function $\zeta_F(s)$ satisfies
\begin{equation*}
\beta < 1 - \frac{1}{B_F},
\label{eqn:Stark}
\end{equation*}
where $B_F$ is given by \eqref{def:B_F}. Hence, by 
\eqref{def:X_0}, we have $X^{\beta} = X \cdot X^{\beta-1} \leq \eta X$. By Minkowski's bound, observe that $n_F \ll \log D_F \ll \sqrt{\log X}$. It follows that $n_F^{-1/2} (\log X)^{1/2} \gg (\log X)^{1/4}$, so the error term in \eqref{eqn:EffectiveCDT_original}  is crudely bounded by $O(X /(\log X)^2)$. 
\end{proof}

\begin{lem} \label{lem:LogPowerPrimeSum}
Let $k \geq 1$ be an integer and $\eta \in (0,1/2)$ be arbitrary. Let $X \geq Y \geq X_0$ where $X_0 = X_0(F,\eta)$ is defined by \eqref{def:X_0}. Denote $E_{k-1}(t) = \sum_{j=0}^{k-1} t^j/j!$.  Then
\begin{equation*}
	\begin{aligned}
		& \sum_{Y < \N^F_{\Q}\kn \leq X} \dfrac{\Lambda_F(\kn)}{\N^F_{\Q}\kn^{\sigma}} (\log \N^F_{\Q}\kn)^{k-1} \\
		& \qquad\qquad 
			\geq  \dfrac{(k-1)!}{(\sigma-1)^{k}} \cdot ( 1 - \eta ) \big( Y^{1-\sigma}  - X^{1-\sigma} E_{k-1}( (\sigma-1) \log X) \big) + O_k\Big(\frac{1}{(\sigma-1)^{k-1}}\Big)
	\end{aligned}
\end{equation*}
uniformly for $1 < \sigma < 2$.
\end{lem}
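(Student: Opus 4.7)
The plan is to apply Abel summation with the smooth weight $g(t) := (\log t)^{k-1} t^{-\sigma}$, then insert the two-sided bound on $\psi_F(t) := \sum_{\N^F_{\Q}\kn \leq t} \Lambda_F(\kn)$ provided by \cref{lem:PIT}. A direct computation gives $-g'(t) = t^{-\sigma-1}(\log t)^{k-2}[\sigma \log t - (k-1)]$, which is nonnegative on $[Y,X]$ once $\log Y \geq (k-1)/\sigma$; this holds in the regime of interest, and the finitely many small-$Y$ cases can be absorbed in the $O_k$-term. Partial summation then yields
\[
\sum_{Y < \N^F_{\Q}\kn \leq X} \Lambda_F(\kn) g(\N^F_{\Q}\kn) = g(X) \psi_F(X) - g(Y) \psi_F(Y) + \int_Y^X (-g'(t)) \psi_F(t)\, dt.
\]

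Applying the lower bound $\psi_F(t) \geq (1-\eta) t - C t/(\log t)^2$ in the two positive contributions and the upper bound $\psi_F(Y) \leq (1+\eta) Y + C Y/(\log Y)^2$ in the negative contribution $-g(Y)\psi_F(Y)$, and then regrouping via the integration-by-parts identity $g(X) X - g(Y) Y + \int_Y^X (-g'(t)) t\, dt = \int_Y^X g(t)\, dt$, produces a lower bound of the form $(1-\eta)\int_Y^X g(t)\,dt - 2\eta\, g(Y) Y + (\text{smoothing errors})$. The change of variable $u = (\sigma-1)\log t$ together with the standard identity $\int_a^\infty u^{k-1} e^{-u}\, du = (k-1)!\, e^{-a} E_{k-1}(a)$ evaluates
\[
\int_Y^X g(t)\, dt = \frac{(k-1)!}{(\sigma-1)^k}\big[Y^{1-\sigma} E_{k-1}((\sigma-1)\log Y) - X^{1-\sigma} E_{k-1}((\sigma-1)\log X)\big],
\]
and dropping the factor $E_{k-1}((\sigma-1)\log Y) \geq 1$ gives the announced main term.

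The main obstacle is to verify that every residual is $O_k((\sigma-1)^{-(k-1)})$. For the boundary residual $-2\eta\, g(Y) Y = -2\eta\, Y^{1-\sigma}(\log Y)^{k-1}$, I rewrite $Y^{1-\sigma}(\log Y)^{k-1} = (\sigma-1)^{-(k-1)} u^{k-1} e^{-u}$ with $u = (\sigma-1)\log Y$ and apply the pointwise bound $\max_{u \geq 0} u^{k-1} e^{-u} = (k-1)^{k-1} e^{-(k-1)}$; the boundary smoothing errors $g(X) X (\log X)^{-2}$ and $g(Y) Y (\log Y)^{-2}$ are handled by the same substitution and are of strictly lower order in $(\sigma-1)$. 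The interior smoothing error $C\int_Y^X (-g'(t)) t(\log t)^{-2}\, dt$ is dominated by $C\sigma \int_Y^\infty t^{-\sigma}(\log t)^{k-3}\, dt$: for $k \geq 3$ the substitution yields $O_k((\sigma-1)^{-(k-2)})$, using the identity $e^{-a} E_{k-3}(a) \leq 1$; for $k = 2$ the same substitution produces $E_1((\sigma-1)\log Y) \leq 1/((\sigma-1)\log Y) = O((\sigma-1)^{-1})$; and for $k = 1$ the crude bound $t^{-\sigma} \leq t^{-1}$ gives $\int_Y^\infty t^{-1}(\log t)^{-2}\, dt = 1/\log Y = O(1)$. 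Summing these residuals establishes the claimed inequality uniformly for $1 < \sigma < 2$.
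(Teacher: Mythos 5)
Your proposal is correct and follows essentially the same route as the paper: partial summation against $\psi_F$, input from \cref{lem:PIT}, and evaluation of $\int_Y^X t^{-\sigma}(\log t)^{k-1}\,dt$ in terms of the truncated exponential $E_{k-1}$ (you do this by substitution and the incomplete-Gamma identity, the paper by iterated integration by parts — same result). You are in fact somewhat more careful than the paper's write-up: you retain the boundary term $-g(Y)\psi_F(Y)$ (omitted from the paper's displayed partial-summation identity but harmless since it is $O_k((\sigma-1)^{-(k-1)})$, exactly as you show) and you explicitly address the sign of $-g'$ on the initial segment where $\log t < (k-1)/\sigma$. One small caution: in your $k=2$ interior-error estimate the symbol $E_1$ denotes the exponential integral $\int_x^\infty e^{-u}u^{-1}\,du$, which collides with the paper's use of $E_{k-1}$ for the truncated exponential; the bound $\int_x^\infty e^{-u}u^{-1}\,du \le 1/x$ is correct, but a different symbol would avoid confusion.
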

\begin{proof}
	This is a combination of partial summation and \cref{lem:PIT}. We include the proof for sake of completeness. Define $\psi_F(t) = \sum_{\N^F_{\Q}\kn < t} \Lambda_F(\kn)$ for $t > 1$. By partial summation,
	\begin{equation*}
		\begin{aligned}
			\sum_{Y < \N^F_{\Q}\kn \leq X} \dfrac{\Lambda_F(\kn)}{\N^F_{\Q}\kn^{\sigma}} (\log \N^F_{\Q}\kn)^{k-1} = \psi_F(X) X^{-\sigma} (\log X)^{k-1} - \int_{Y}^X \psi_F(t) \dfrac{d}{dt}\big[t^{-\sigma} (\log t)^{k-1} \big] dt. 
		\end{aligned}
	\end{equation*}
	By \cref{lem:PIT}, it follows for $t \geq Y \geq X_0$ that
	\[
	-\psi_F(t) \dfrac{d}{dt}\big[t^{-\sigma} (\log t)^{k-1} \big] \geq (1- \eta)\sigma t^{-\sigma} (\log t)^{k-1} \big\{ 1 + O_k(\frac{1}{\log t}) \big\}. 
	\]
	Discarding the first term in the previous equation by positivity and using the above inequality, we deduce that
	\begin{equation}
	\sum_{Y < \N^F_{\Q}\kn \leq X} \dfrac{\Lambda_F(\kn)}{\N^F_{\Q}\kn^{\sigma}} (\log \N^F_{\Q}\kn)^{k-1}  
	\geq  (1- \eta) \int_{Y}^X  t^{-\sigma} (\log t)^{k-1} dt + O_k\Big( \int_Y^X t^{-\sigma} (\log t)^{k-2} dt \Big).
	\label{eqn:LogPowerIntegral} 
	\end{equation}
	The remaining integrals are computed by parts. One iteration yields:
	\begin{equation*}
		\begin{aligned}
			\int_{Y}^X  t^{-\sigma} (\log t)^{k-1} dt 
			&	= \dfrac{Y^{1-\sigma}}{\sigma-1} (\log Y)^{k-1} - \dfrac{X^{1-\sigma}}{\sigma-1}  (\log X)^{k-1} + \dfrac{k-1}{(\sigma-1)} \int_Y^X t^{-\sigma} (\log t)^{k-2} dt.
		\end{aligned}
	\end{equation*}
	Proceeding by induction, we conclude that
	\begin{equation*}
		\begin{aligned}
				\int_{Y}^X  t^{-\sigma} (\log t)^{k-1} dt 
				&	= (k-1)!  \sum_{j=0}^{k-1}  \Big[ \dfrac{Y^{1-\sigma} (\log Y)^{k-1-j} }{(k-1-j)!(\sigma-1)^{j+1}}  - \dfrac{ X^{1-\sigma} (\log X)^{k-1-j}}{(k-1-j)! (\sigma-1)^{j+1}}  \Big] \\
				&	= \dfrac{(k-1)!  }{(\sigma-1)^k} \big( Y^{1-\sigma} E_{k-1}( (\sigma-1)\log Y) - X^{1-\sigma} E_{k-1}( (\sigma-1) \log X) \big).\\
		\end{aligned}
	\end{equation*}
	Substituting this expression in \eqref{eqn:LogPowerIntegral} and observing $1 \leq E_{k-1}(t) \leq e^t$  (in order to simplify the main term and error term involving $Y$), we obtain the desired result.
\end{proof}


\begin{lem} \label{lem:RamifiedPrimes}
Let $K$ be a finite extension of $F$. Let $V(K/F)$ be the set of places $v$ of $F$ which ramify in $K$ and $\kp_v$ be the prime ideal of $F$ attached to $v$. Unconditionally,
	\[
	\sum_{\substack{ v \in V(K/F) }} \log \N^F_{\Q} \kp_v \leq \log D_K.
	\]
	If $K/F$ is Galois then
	\[
	\sum_{\substack{ v \in V(K/F) }} \frac{\log \N^F_{\Q} \kp_v}{\N^F_{\Q} \kp_v} \leq \sqrt{\frac{2 [F:\Q] }{[K:F]}\log D_K }.
	\]
\end{lem}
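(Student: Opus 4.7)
Both bounds rest on the conductor--discriminant tower $\log D_K = [K:F]\log D_F + \log \N^F_\Q(\mathfrak{d}_{K/F})$, where $\mathfrak{d}_{K/F}$ is the relative discriminant; since $D_F \geq 1$ this gives $\log \N^F_\Q(\mathfrak{d}_{K/F}) \leq \log D_K$. For the unconditional inequality, I invoke the classical fact that a prime $\kp$ of $F$ ramifies in $K$ if and only if $\kp \mid \mathfrak{d}_{K/F}$. Consequently $\prod_{v \in V(K/F)} \kp_v$ divides $\mathfrak{d}_{K/F}$, and taking $\N^F_\Q$ and then logarithms yields $\sum_v \log \N^F_\Q \kp_v \leq \log \N^F_\Q(\mathfrak{d}_{K/F}) \leq \log D_K$.

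For the Galois case I first upgrade this to a sharper bound on $\sum_v \log \N^F_\Q \kp_v$. Because $K/F$ is Galois, all primes $\mathfrak{P}$ of $K$ above a given ramified $\kp_v$ share a common ramification index $e_v \geq 2$, residue degree $f_v$, and count $g_v = [K:F]/(e_v f_v)$. The classical local bound that the $\mathfrak{P}$-adic valuation of the different $\mathfrak{D}_{K/F}$ is at least $e_v - 1$, together with $\mathfrak{d}_{K/F} = \N^K_F(\mathfrak{D}_{K/F})$, implies that the contribution of $\kp_v$ to $\mathfrak{d}_{K/F}$ is at least
\[
\kp_v^{(e_v-1)f_v g_v} = \kp_v^{[K:F](1-1/e_v)} \geq \kp_v^{[K:F]/2}.
\]
Summing over $v \in V(K/F)$ and invoking $\log \N^F_\Q(\mathfrak{d}_{K/F}) \leq \log D_K$ then gives $\sum_{v} \log \N^F_\Q \kp_v \leq 2\log D_K / [K:F]$.

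To introduce the $1/\N^F_\Q\kp_v$ weight, I apply Cauchy--Schwarz with the factorization $\frac{\log \N^F_\Q\kp_v}{\N^F_\Q\kp_v} = \sqrt{\log \N^F_\Q\kp_v} \cdot \frac{\sqrt{\log \N^F_\Q\kp_v}}{\N^F_\Q\kp_v}$:
\[
\Bigl(\sum_v \frac{\log \N^F_\Q \kp_v}{\N^F_\Q \kp_v}\Bigr)^2 \leq \Bigl(\sum_v \log \N^F_\Q \kp_v\Bigr) \cdot \Bigl(\sum_v \frac{\log \N^F_\Q \kp_v}{(\N^F_\Q \kp_v)^2}\Bigr).
\]
The first factor is controlled by the display in the previous paragraph. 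For the second, I extend the sum over all primes $\kp$ of $F$, decompose over rational primes $p$, and use $\sum_{\kp \mid p} f_\kp \leq [F:\Q]$ together with $\N^F_\Q\kp = p^{f_\kp} \geq p$ to obtain
\[
\sum_{\kp} \frac{\log \N^F_\Q\kp}{(\N^F_\Q\kp)^2} \leq [F:\Q] \sum_p \frac{\log p}{p^2} \leq [F:\Q],
\]
where $\sum_p \log p/p^2 < 1$ follows, e.g., from $\sum_{n\geq 2} \log n/n^2 = -\zeta'(2) < 1$. Multiplying the two factors produces exactly $\sqrt{2[F:\Q]\log D_K/[K:F]}$, as claimed. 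The entire argument is elementary; the only non-trivial input is the classical local lower bound $\mathrm{ord}_\mathfrak{P}(\mathfrak{D}_{K/F}) \geq e_\mathfrak{P}-1$ for the different, which I take for granted.
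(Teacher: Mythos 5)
Your proof is correct and follows essentially the same route as the paper: the unconditional bound from the conductor–discriminant formula, and the Galois bound via Cauchy–Schwarz applied to $\tfrac{\log\N\kp_v}{\N\kp_v} = \sqrt{\log\N\kp_v}\cdot\tfrac{\sqrt{\log\N\kp_v}}{\N\kp_v}$, combined with the bound $\sum_v\log\N^F_\Q\kp_v \leq \tfrac{2}{[K:F]}\log D_K$ for ramified primes in a Galois extension. The only difference is that where the paper cites Serre (\emph{Local Fields}, Prop.~5, \S I.3) for that last bound, you derive it directly from the local inequality $\mathrm{ord}_{\mathfrak{P}}(\mathfrak{D}_{K/F})\geq e_{\mathfrak{P}}-1$ and the Galois uniformity of $(e,f,g)$; this self-contains the citation but is otherwise the same argument.
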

\begin{proof}
The unconditional inequality follows from the well-known formula
\[
\log D_K = [K:F] \log D_F + \log \N^F_{\Q} \kd_{K/F},
\]
where $\kd_{K/F} = \N^K_{F} \kD_{K/F}$ and $\kD_{K/F}$ is the relative different ideal of $K/F$. If $K/F$ is Galois then, by Cauchy-Schwarz and \cite[Proposition 5, Section I.3]{Ser1},
\begin{equation*}
	\begin{aligned}
		\sum_{\substack{ v \in V(K/F) }} \frac{\log \N^F_{\Q} \kp_v}{\N^F_{\Q} \kp_v} 
			& \leq \Big( \sum_{\substack{ v \in V(K/F) }} \log \N^F_{\Q} \kp_v \Big)^{1/2} \Big( \sum_{\substack{ v \in V(K/F) }} \frac{\log \N^F_{\Q} \kp_v}{\N^F_{\Q} \kp_v^2} \Big)^{1/2} \\
			& \leq \big( \frac{2}{[K:F]} \log D_K \big)^{1/2} \Big([F:\Q] \sum_{p} \frac{\log p}{p^2} \Big)^{1/2} \\
			& \leq \sqrt{ \frac{2 [F:\Q] \log D_K }{[K:F]}},
	\end{aligned}
\end{equation*} 
as desired. In the above, we used that there are at most $[F:\Q]$ prime ideals $\kp$ of $F$ above a given rational prime $p$ and $\sum_p \frac{\log p}{p^2} < 1$. 
\end{proof}


\section{Polynomial explicit inequality}
\label{sec:Poly}
Let $K$ be a number field with $D_K = |\mathrm{disc}(K/\Q)|$ and let $\zeta_K(s)$ be the Dedekind zeta function of $K$. Our starting point is a variant of the classical explicit formula.

\begin{prop}[Thorner--Z.]
\label{prop:Classical_Explicit_Inequality}
Let $K$ be a number field and $0 < \epsilon < 1/8$ be arbitrary. There exists $\delta = \delta(\epsilon) > 0$ such that
\[
-\Re\Big\{ \dfrac{\zeta_K'}{\zeta_K}(s) \Big\} \leq (\tfrac{1}{4} + \epsilon) \log D_K  + \Re\Big\{\dfrac{1}{s-1}\Big\} - \sum_{|1+it-\rho| < \delta} \Re\Big\{ \dfrac{1}{s-\rho} \Big\} + O_{\epsilon}([K:\Q]),
\]		
uniformly for $s = \sigma + it$ with $1 < \sigma < 1+\epsilon$ and $|t| \leq 1$.
\end{prop}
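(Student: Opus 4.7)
My plan for the proof is to combine the Hadamard factorization of the completed Dedekind zeta function with a Phragm\'en--Lindel\"of convexity argument on the critical line, since the coefficient $\tfrac{1}{4}$ is exactly the exponent supplied by the standard convexity bound $|\zeta_K(\tfrac{1}{2}+it)| \ll_\epsilon (D_K(|t|+3)^{[K:\Q]})^{1/4+\epsilon}$.

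First I would invoke the Hadamard product for the entire order-one function $\Lambda_K(s) = s(s-1)D_K^{s/2}\prod_{v\mid\infty}\Gamma_v(s)\zeta_K(s)$, take the logarithmic derivative, take real parts, and apply the classical identity $\Re(b_K) = -\sum_\rho \Re(1/\rho)$ that follows from $\Lambda_K(s) = \Lambda_K(1-s)$. This would produce, uniformly in the stated range,
\[
-\Re\frac{\zeta_K'}{\zeta_K}(s) \;=\; \tfrac{1}{2}\log D_K + \Re\frac{1}{s-1} - \sum_\rho \Re\frac{1}{s-\rho} + O([K:\Q]),
\]
the $O$-term absorbing $\Re(1/s)$ together with the $[K:\Q]$ archimedean gamma-factor contributions, each of which is bounded in this region. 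Since $\Re(1/(s-\rho)) \geq 0$ whenever $\Re s > 1 \geq \Re \rho$, discarding the zeros with $|1+it-\rho| \geq \delta$ is legitimate but only delivers the weaker coefficient $\tfrac{1}{2}$.

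To sharpen the coefficient to $\tfrac{1}{4}+\epsilon$ I would couple the Hadamard identity to the convexity bound via a Jensen or Borel--Carath\'eodory argument applied to a modified holomorphic function, for instance $F(z) = (s-1+z)\zeta_K(s+z)$, on a disk of radius $R = \tfrac{1}{2} + O(\epsilon)$ centred at $z=0$, with the linear factor $s-1+z$ cancelling the pole at $s+z = 1$. On the boundary circle $|z| = R$ the point $s+z$ reaches across the critical line, so $\log|F|$ is controlled by the convexity estimate and contributes at most $(\tfrac{1}{4}+\epsilon)\log D_K + O_\epsilon([K:\Q])$ on average. Matching this against the inside of the disk, where $\log|F|$ admits a Hadamard-type expansion in terms of the zeros and the polar term, would yield the key lower bound
\[
\sum_{|1+it-\rho|\geq\delta}\Re\frac{1}{s-\rho} \;\geq\; \bigl(\tfrac{1}{4}-\epsilon\bigr)\log D_K - O_\epsilon([K:\Q]).
\]
Feeding this back into the Hadamard identity cancels $(\tfrac{1}{4}-\epsilon)\log D_K$ against $\tfrac{1}{2}\log D_K$ and leaves the claimed coefficient.

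The main obstacle will be the simultaneous calibration of the three small parameters $R$, $\delta$, and $\epsilon$: $R$ must exceed $\tfrac{1}{2}$ so that the boundary circle meets the critical line and convexity is available, yet the constants in the Jensen / Borel--Carath\'eodory transfer degrade as $R \to \tfrac{1}{2}^+$; meanwhile $\delta$ must be small enough that the listed close-zero sum cleanly separates from the far-zero sum, but large enough that the zero-counting in the disk translates the mean-value bound into a pointwise lower bound. Managing these trade-offs is what forces the arbitrary $\epsilon$-loss in the coefficient of $\log D_K$.
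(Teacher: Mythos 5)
Your outline correctly identifies the mechanism behind this estimate: the paper itself only cites \cite[Proposition 2.6]{ThornerZaman_2016a} (with variants in \cite{XLi,KadiriNg} and details in \cite[Proposition 3.2.3]{Zaman_Thesis}), and the cited proofs proceed exactly as you describe --- the Hadamard factorization supplies the crude $\tfrac{1}{2}\log D_K$ version, and a Borel--Carath\'eodory/Jensen argument on a disk of radius just beyond $\tfrac{1}{2}$, fed by the Phragm\'en--Lindel\"of convexity bound $|\zeta_K(\tfrac12+it)| \ll_\epsilon (D_K(|t|+3)^{[K:\Q]})^{1/4+\epsilon}$, sharpens the coefficient to $\tfrac{1}{4}+\epsilon$ at the cost of the $O_\epsilon([K:\Q])$ term and the restriction to zeros within $\delta$ of $1+it$. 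Your reorganization (isolating a lower bound on the far-zero sum and substituting back) is equivalent to the usual presentation, which applies the Borel--Carath\'eodory lemma directly to $(s-1)\zeta_K(s)$ to obtain the inequality in one step.
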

\begin{rem*}
	The value $1/4$ is derived from the convexity bound for $\zeta_K(s)$ in the critical strip.
\end{rem*}

\begin{proof}
	 This follows from \cite[Proposition 2.6]{ThornerZaman_2016a}; similar variants appear in \cite{XLi,KadiriNg}.  See  \cite[Proposition 3.2.3]{Zaman_Thesis} for details.
\end{proof}

We would like to analyze more general sums  over prime ideals by considering higher derivatives of the logarithmic derivative  $-\dfrac{\zeta_K'}{\zeta_K}(s)$. This generalization (\cref{prop:Polynomial_Explicit_Inequality}) is motivated by the work of Heath-Brown \cite[Section 4]{HBLinnik}. 

Given a polynomial $P(x) \in \R_{\geq 0}[x]$ of degree $d$ with $P(0) = 0$, write 
\[
P(x) = \sum_{k=1}^d a_k x^k
\]
and define
\begin{equation}
	\cS(\sigma) = \cS_K(\sigma; P) := \sum_{\kn \subseteq \cO_K} \frac{\Lambda_K(\kn)}{\N\kn^{\sigma}} \sum_{k=1}^d a_k \frac{( (\sigma-1) \log \N\kn)^{k-1}}{(k-1)!} 
	\label{def:S_PolyWeight}
\end{equation}
for $\sigma > 1$. Recall the definition of an admissible polynomial from \eqref{def:AdmissiblePoly}. Note the condition $P'(0) = 1$ is imposed for normalization purposes since it implies $a_1 = 1$.

\begin{prop} 
\label{prop:Polynomial_Explicit_Inequality}
Let $0 < \epsilon < 1/8$ and $\lambda > 0$ be arbitrary. If $P(x) = \ds\sum_{k=1}^d a_k x^k$ is an admissible polynomial of degree $d$ then 
\[
\cS(\sigma) = \cS_K(\sigma, P)  \leq  (\tfrac{1}{4}+\epsilon) \log D_K + \frac{P(1)}{\sigma-1}    + O_{\epsilon,P,\lambda}([K:\Q])
\]
uniformly for 
\[
1 < \sigma \leq 1 + \min\Big\{ \epsilon, \frac{\lambda [K:\Q]}{\log D_K} \Big\}.
\]
\end{prop}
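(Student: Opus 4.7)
The plan is to mirror the proof of \cref{prop:Classical_Explicit_Inequality} with polynomial-weighted derivatives of $F(s) := -\zeta_K'/\zeta_K(s)$ in place of $F(s)$ itself. The key algebraic feature is that the admissibility condition \eqref{def:AdmissiblePoly} is engineered so that the contribution of the non-trivial zeros of $\zeta_K$ to $\cS(\sigma)$ is non-positive under the polynomial weighting; the analytic input responsible for the coefficient $\tfrac14$ of $\log D_K$ is exactly the same convexity estimate as in \cref{prop:Classical_Explicit_Inequality}.

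Since $F(s) = \sum_\kn \Lambda_K(\kn)\N\kn^{-s}$ for $\Re s>1$, termwise differentiation gives $\sum_\kn \Lambda_K(\kn)(\log\N\kn)^{k-1}\N\kn^{-\sigma} = (-1)^{k-1}F^{(k-1)}(\sigma)$, and hence
\[
\cS(\sigma) = \sum_{k=1}^d \frac{a_k\,(-(\sigma-1))^{k-1}}{(k-1)!}\,F^{(k-1)}(\sigma).
\]
Substituting the Hadamard partial fraction expansion $F(s) = \frac{1}{s-1} - \sum_\rho\!\big[\frac{1}{s-\rho}+\frac{1}{\rho}\big] + H_K(s)$ (with $\rho$ ranging over non-trivial zeros of $\zeta_K$ and $H_K(s)$ collecting the $\tfrac12\log D_K$, gamma-factor, and Hadamard-constant contributions), differentiating termwise, and collapsing the $k$-sums via the identity $\sum_{k=1}^d a_k u^{k-1} = P(u)/u$ (valid since $P(0)=0$) produces
\[
\cS(\sigma) = \frac{P(1)}{\sigma-1} - \sum_\rho\!\Big(\frac{P(u_\rho)}{\sigma-1} + \frac{1}{\rho}\Big) + \mathcal H(\sigma),
\]
where $u_\rho := (\sigma-1)/(\sigma-\rho)$ and $\mathcal H(\sigma) := \sum_{k=1}^d a_k(-(\sigma-1))^{k-1}/(k-1)!\cdot H_K^{(k-1)}(\sigma)$; pairing $P(u_\rho)/(\sigma-1)$ with $1/\rho$ makes the zero sum absolutely convergent.

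Every non-trivial zero of $\zeta_K$ satisfies $0<\Re\rho<1$, so $\Re(1/u_\rho) = (\sigma-\Re\rho)/(\sigma-1)\geq 1$ and $\Re(1/\rho)>0$. The admissibility condition \eqref{def:AdmissiblePoly} yields $\Re P(u_\rho)\geq 0$, so each summand $P(u_\rho)/(\sigma-1)+1/\rho$ has non-negative real part, and pairing complex conjugate zeros makes the full zero sum real and non-negative. Hence
\[
\cS(\sigma) \leq \frac{P(1)}{\sigma-1} + \Re\mathcal H(\sigma).
\]

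It remains to bound $\Re \mathcal H(\sigma) \leq (\tfrac14+\epsilon)\log D_K + O_{\epsilon,P,\lambda}([K:\Q])$. The $k=1$ piece is $\Re H_K(\sigma)$, and the required bound with coefficient $\tfrac14$ (rather than the $\tfrac12$ produced by a direct application of the Hadamard formula) is exactly the specialisation to $s=\sigma$ of \cref{prop:Classical_Explicit_Inequality}; both invoke the Phragm\'en--Lindel\"of estimate $|\zeta_K(\tfrac12+it)|\ll D_K^{1/4+\epsilon}$. For each $k\geq 2$ the $\tfrac12\log D_K$ piece of $H_K$ is annihilated by differentiation, leaving only derivatives of $1/s$ and of the archimedean factors $\Gamma_\R'/\Gamma_\R$ and $\Gamma_\C'/\Gamma_\C$, each bounded by $O_k([K:\Q])$ uniformly for $1<\sigma<2$ by Stirling. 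The weight $(\sigma-1)^{k-1}/(k-1)!$, combined with the hypothesis $\sigma-1\leq\lambda[K:\Q]/\log D_K$ and Minkowski's estimate $[K:\Q]\ll\log D_K$, keeps each $k\geq 2$ piece below $O_{P,\lambda}([K:\Q])$. The main obstacle is concentrated in the $k=1$ bound: squeezing $\tfrac14$ out of the $\log D_K$ coefficient (in place of the naive $\tfrac12$) requires the same convexity input as \cref{prop:Classical_Explicit_Inequality}, while the rest of the argument reduces to the algebraic manipulations that admissibility has been crafted to facilitate.
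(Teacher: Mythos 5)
Your algebraic reduction is correct up to the line
\[
\cS(\sigma) \leq \frac{P(1)}{\sigma-1} + \Re\mathcal H(\sigma),
\]
but the final step, the claim that $\Re\mathcal H(\sigma)\leq(\tfrac14+\epsilon)\log D_K+O_{\epsilon,P,\lambda}([K:\Q])$, has a genuine gap. In your decomposition $F(s)=\frac{1}{s-1}-\sum_\rho\big(\frac{1}{s-\rho}+\frac{1}{\rho}\big)+H_K(s)$, the quantity $H_K$ contains not only the explicit $\tfrac12\log D_K$ but also $-B_K$, and by the standard identity $\Re B_K=-\sum_\rho\Re(1/\rho)$ one has $\Re H_K(\sigma)=\tfrac{1}{\sigma}+\tfrac12\log D_K+\Re\tfrac{\gamma_K'}{\gamma_K}(\sigma)+\sum_\rho\Re(1/\rho)$. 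Since $\sum_\rho\Re(1/\rho)\geq0$ and $\Re(\gamma_K'/\gamma_K)(\sigma)=O([K:\Q])$, we have $\Re H_K(\sigma)\geq\tfrac12\log D_K-O([K:\Q])$; this is strictly larger than $(\tfrac14+\epsilon)\log D_K+O([K:\Q])$, so the claimed bound cannot hold. \cref{prop:Classical_Explicit_Inequality} is not a statement about $H_K(\sigma)$: it bounds $-\Re(\zeta_K'/\zeta_K)(s)$ \emph{with a truncated zero sum $\sum_{|1+it-\rho|<\delta}\Re\{1/(s-\rho)\}$ subtracted on the right}, and that subtracted positive quantity is exactly what pays for the improvement from $\tfrac12$ to $\tfrac14$. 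Once you discard the full zero sum from $\cS(\sigma)$, there is nothing left for \cref{prop:Classical_Explicit_Inequality} to compensate against.

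The paper avoids this by never isolating $H_K$. It applies \cref{prop:Classical_Explicit_Inequality} \emph{directly} to the $k=1$ contribution $\cS(\sigma;P_1)$, which keeps the truncated zero sum $-\sum_{|1-\rho|<\delta}\Re\{1/(\sigma-\rho)\}$ in the inequality. For $k\geq2$ it uses the exact Hadamard identity for $\frac{d^{k-1}}{ds^{k-1}}(-\zeta_K'/\zeta_K)$ and then truncates the zero sum to $|1-\rho|<\delta$, showing via the zero-counting bound and the hypothesis $\sigma-1\leq\lambda[K:\Q]/\log D_K$ that the discarded far zeros contribute only $O_{\epsilon,P,\lambda}([K:\Q])$ (this is where the upper restriction on $\sigma$ is actually used). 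Only after all $k$ are combined does one obtain $-\frac{1}{\sigma-1}\sum_{|1-\rho|<\delta}\Re\{P((\sigma-1)/(\sigma-\rho))\}$, and only at that stage is admissibility invoked to discard it. Thus the zero sum must be carried through the polynomial combination, not dropped at the outset.
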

\begin{proof} This is essentially \cite[Proposition 5.2]{Zaman_2015a} with \cref{prop:Classical_Explicit_Inequality} used in place of \cite[Lemma 4.3]{Zaman_2015a}. Our argument proceeds similarly but we exhibit a different range of $\sigma$ which is more suitable for our purposes. For simplicity, denote $\sL = \log D_K$ and $n_K = [K:\Q]$. Define
\[
P_2(x) := \sum_{k=2}^d a_k x^k = P(x) - a_1 x. 
\]
From the functional equation of $\zeta_K(s)$, it follows by \cite[Lemma 2.6]{Zaman_2015a} that
\begin{equation*}
	\begin{aligned}
		\dfrac{(-1)^{k-1}}{(k-1)!} \dfrac{d^{k-1}}{ds^{k-1}}\Big( -\dfrac{\zeta_K'}{\zeta_K}(s) \Big)
		& = \dfrac{1}{(s-1)^k}  - \sum_{\rho} \dfrac{1}{(s-\rho)^k}  + \dfrac{1}{\sigma^k} - \dfrac{(-1)^{k}}{(k-1)!} \dfrac{d^{k-1}}{ds^{k-1}}\Big(\dfrac{\gamma_K'}{\gamma_K}(s) \Big) \\
		& = \dfrac{1}{(s-1)^k}  - \sum_{\rho} \dfrac{1}{(s-\rho)^k}  + O(n_K) \\
	\end{aligned}
\end{equation*}
for $\Re\{s\} > 1$. On the other hand, from the Euler product of $\zeta_K(s)$ one can verify that
\[
(-1)^{k-1} \dfrac{d^{k-1}}{ds^{k-1}}\Big( -\dfrac{\zeta_K'}{\zeta_K}(s) \Big)
= \sum_{\kn \subseteq \cO_K} \frac{\Lambda_K(\kn)}{\N\kn^{s}} (\log \N\kn)^{k-1}
\]
for $\Re\{s\} > 1$. Comparing these two expressions at $s=\sigma$ with \eqref{def:S_PolyWeight} and taking real parts\footnote{This is redundant as the expression is already real, but clarifies the later use of admissibility of $P$.}, we deduce that
\begin{equation}
	\cS(\sigma; P_2) = \dfrac{1}{\sigma-1} \sum_{k=2}^d a_k \Re\Big\{ 1 - \sum_{\rho} \Big(\dfrac{\sigma-1}{\sigma-\rho}\Big)^{k-1} \Big\} + O_P(n_K)
	\label{eqn:Poly_2}
\end{equation}
for $\sigma > 1$. 
We wish to restrict the sum over zeros $\rho$ in \eqref{eqn:Poly_2} to $|1-\rho| < \delta$ for $\delta = \delta(\epsilon) > 0$ given by \cref{prop:Classical_Explicit_Inequality}. Observe by \cite[Lemma 5.4]{LO} that
\begin{equation}
\begin{aligned}
\sum_{\substack{\rho = \beta+i\gamma \\ |1-\rho| \geq \delta}} \Re\Big\{ \Big(\dfrac{\sigma-1}{\sigma-\rho}\Big)^{k-1} \Big\} 
&	\ll_{\epsilon,k} (\sigma-1)^{k-1} \sum_{T=0}^{\infty} \sum_{\substack{\rho = \beta+i\gamma \\ T \leq |\gamma| \leq T+1} } \dfrac{1}{1+t^2} \\
&	\ll_{\epsilon,k,\lambda} \Big(\dfrac{n_K}{\sL}\Big)^{k-1} \sum_{T=0}^{\infty} \dfrac{\sL + n_K \log(T+3)}{1+T^2}\\
&	\ll_{\epsilon,k,\lambda} \Big(\dfrac{n_K}{\sL}\Big)^{k-1} \sL \\
& \ll_{\epsilon,k,\lambda} n_K,	
\end{aligned}
\label{eqn:Poly_2_ExtraZeros}
\end{equation}
since $k \geq 2, \sigma < 1 + \frac{\lambda n_K}{\sL},$ and $n_K \ll \sL$. Now, consider the linear polynomial $P_1(x) = a_1 x = x$ as $P'(0) = 1$. By \cref{prop:Classical_Explicit_Inequality}, we find that
\[
\cS(\sigma; P_1) \leq (\tfrac{1}{4}+\epsilon) \sL  + a_1\Re\Big\{ \frac{1}{\sigma-1} - \sum_{|1-\rho| < \delta} \frac{1}{\sigma-\rho} \Big\} + O_{\epsilon}(n_K).
\]
Notice $\cS(\sigma; P) = \cS(\sigma; P_1) + \cS(\sigma; P_2)$ by linearity in the second argument. Hence, we may combine the above with \eqref{eqn:Poly_2} and \eqref{eqn:Poly_2_ExtraZeros} yielding
\begin{equation*}
	\begin{aligned}
\cS(\sigma; P) 
& \leq (\tfrac{1}{4}+\epsilon) \sL + \dfrac{1}{\sigma-1} \sum_{k=1}^d a_k \Re\Big\{ 1 - \sum_{|1-\rho| < \delta} \Big(\dfrac{\sigma-1}{\sigma-\rho}\Big)^{k-1} \Big\} + O_{\epsilon,P,\lambda}(n_K) \\
& \leq   (\tfrac{1}{4}+\epsilon) \sL+ \dfrac{1}{\sigma-1}  P(1) - \dfrac{1}{\sigma-1} \sum_{|1-\rho| < \delta} \Re\big\{ P\big(\tfrac{\sigma-1}{\sigma-\rho}\big) \big\} + O_{\epsilon,P,\lambda}(n_K) \\
& \leq  (\tfrac{1}{4}+\epsilon) \sL + \dfrac{P(1)}{\sigma-1}  + O_{\epsilon,P,\lambda}(n_K). \\
	\end{aligned}
\end{equation*}
In the last step, we noted $\Re\big\{P\big(\tfrac{\sigma-1}{\sigma-\rho}\big)\big\} \geq 0$ by admissibility of $P$. 
\end{proof}

\section{Proof of \cref{thm:1}}
\label{sec:proof}
We will deduce \cref{thm:1} from the following result.

\begin{thm} \label{thm:MainTheorem} 
Let $K/F$ be an extension of number fields of degree $n \geq 2$ and $X \geq Y$. Assume one of the following holds: 
\begin{enumerate}
	\item[(A1)] Every prime ideal $\kp$ of $F$ which is degree 1 over $\Q$ with $Y < \N^F_{\Q}\kp \leq X$ splits completely in $K$. 
	\item[(A2)] Every unramified prime ideal $\kp$ of $F$ which is degree 1 over $\Q$ with $Y < \N^F_{\Q}\kp \leq X$ splits completely in $K$. 
	\item[(A3)] Assumption (A2) holds and $K/F$ is Galois. 
\end{enumerate}
Let $0 < \epsilon < \tfrac{1}{8}$ be arbitrary and $P(x) = \sum_{k=1}^d a_k x^d$ be an admissible polynomial. For $M = M(\epsilon, P)$ sufficiently large, define
\begin{equation}
Y_0  = \begin{cases} 
	X_0 & \text{if (A1) or (A3) hold,} \\ 
	X_0 + Mn & \text{if (A2) holds},  		
 \end{cases}
 \label{eqn:Y_0}
\end{equation}
where $X_0 = X_0(F, \eta)$ is given by \eqref{def:X_0} and $\eta = \eta(\epsilon,P)$ is sufficiently small. If $X \geq Y \geq Y_0$ then
\begin{equation}
(1- \epsilon) A \log X \leq \frac{(\tfrac{1}{4}+\epsilon) \log D_K}{(n-1)} + \frac{n}{n-1} P(1) \log Y + O_{\epsilon,P}([F:\Q]),
\label{eqn:MainTheorem}
\end{equation}
where $A = A(n,P)$ is given by \eqref{def:A}. 
\end{thm}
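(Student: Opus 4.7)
The plan is to sandwich $\cS_K(\sigma;P)$ from \eqref{def:S_PolyWeight} between the explicit upper bound of \cref{prop:Polynomial_Explicit_Inequality} and a lower bound built from the completely split primes guaranteed by (A1)--(A3). I would set $\sigma = 1 + \lambda/\log X$ with $\lambda>0$ a parameter to be optimized at the end, and pick an auxiliary $\lambda_0 \geq \lambda$ large enough that this $\sigma$ lies in the admissible range of \cref{prop:Polynomial_Explicit_Inequality}. The upper bound then reads
\[
\cS_K(\sigma;P) \leq \bigl(\tfrac{1}{4}+\epsilon\bigr)\log D_K + \frac{P(1)}{\sigma-1} + O_{\epsilon,P,\lambda}([K:\Q]).
\]

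For the lower bound I would keep only the contribution to $\cS_K(\sigma;P)$ coming from primes $\kP$ of $K$ lying above an unramified degree-$1$ prime $\kp$ of $F$ with $Y<\N^F_\Q \kp\leq X$. Under each of (A1)--(A3) such a $\kp$ splits completely, so yields $n$ primes $\kP$ with $\N\kP = \N^F_\Q\kp$; by non-negativity of the admissible polynomial,
\[
\cS_K(\sigma;P) \geq n \sum_{\substack{Y < \N^F_\Q \kp \leq X \\ \kp \text{ unram., deg } 1}} \frac{\log \N^F_\Q\kp}{(\N^F_\Q\kp)^{\sigma}} \sum_{k=1}^{d} a_k \frac{((\sigma-1)\log \N^F_\Q\kp)^{k-1}}{(k-1)!}.
\]
To bring \cref{lem:LogPowerPrimeSum} to bear, I would replace this restricted sum by the full $\Lambda_F$-sum over $(Y,X]$, incurring (i) a loss of $O_P(n_F)$ from higher-degree primes and proper prime powers (standard Chebyshev estimates), and (ii) a ``ramified correction'' $R$ that subtracts the ramified degree-$1$ primes of $F$. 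Applying \cref{lem:LogPowerPrimeSum} term by term and using $\sum_k a_k = P(1)$ then yields
\[
\cS_K(\sigma;P) \geq \frac{n(1-\eta)}{\sigma-1}\Bigl(P(1)Y^{1-\sigma} - e^{-\lambda}\sum_{k=1}^{d} a_k E_{k-1}(\lambda)\Bigr) - R - O_P([F:\Q]).
\]

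Controlling $R$ is where cases separate. In (A1) splitting in $(Y,X]$ forces unramification, so $R=0$. In (A2) the unconditional part of \cref{lem:RamifiedPrimes} gives $R \ll n\log D_K/Y \leq \log D_K/M$ since $Y\geq Mn$; for $M=M(\epsilon,P)$ large enough, after the final division by $n-1$ this lies inside $\epsilon\log D_K/(n-1)$ and is absorbed by the leading $\log D_K$ term on the right-hand side of \eqref{eqn:MainTheorem}. In (A3) the Galois-case half of \cref{lem:RamifiedPrimes} gives the much smaller bound $R \ll \sqrt{n n_F \log D_K}$, absorbed identically.

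Combining the two bounds, multiplying through by $(\sigma-1) = \lambda/\log X$ and using the elementary inequality $Y^{1-\sigma}\geq 1-(\sigma-1)\log Y$, I rearrange to isolate $\log X$ on the left:
\[
\frac{\log X}{\lambda}\Bigl[\bigl(n(1-\eta)-1\bigr)P(1) - n(1-\eta)e^{-\lambda}\sum_{k=1}^{d} a_k E_{k-1}(\lambda)\Bigr] \leq \bigl(\tfrac{1}{4}+\epsilon\bigr)\log D_K + n(1-\eta)P(1)\log Y + O_{\epsilon,P,\lambda}([K:\Q]).
\]
Dividing by $n-1$ and noting that $n(1-\eta)-1 = (n-1) - n\eta$ produces precisely the bracket defining $A(n,P)$ in \eqref{def:A}, up to an $O(\eta)$ loss; choosing $\eta=\eta(\epsilon,P)$ small enough to absorb that loss into a factor of $(1-\epsilon)$ and then taking $\lambda$ to be $\epsilon$-close to the supremum in \eqref{def:A} delivers \eqref{eqn:MainTheorem}.

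The main obstacle is the error-term bookkeeping needed to reach the clean form $O_{\epsilon,P}([F:\Q])$: checking that the ramification correction in case (A2) is genuinely absorbed (which is what forces the definition $Y_0 = X_0 + Mn$), arranging a single fixed $\lambda_0$ in \cref{prop:Polynomial_Explicit_Inequality} that simultaneously accommodates every $\lambda$ used in approximating the supremum, and observing that after division by $n-1$ the upper-bound error $O_{\epsilon,P,\lambda}([K:\Q])$ collapses to $O_{\epsilon,P}([F:\Q])$ because $[K:\Q]/(n-1) \leq 2[F:\Q]$ for $n\geq 2$ and $\lambda$ is ultimately a function of $\epsilon$ and $P$.
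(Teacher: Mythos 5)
Your proposal matches the paper's proof in all essentials: both sandwich $\cS_K(\sigma;P)$ between \cref{prop:Polynomial_Explicit_Inequality} and a lower bound obtained by retaining only the primes of $K$ over degree-one unramified completely split primes of $F$; both write this lower bound as the full $\Lambda_F$-sum of \cref{lem:LogPowerPrimeSum} minus a ramified correction handled via \cref{lem:RamifiedPrimes} (case by case over (A1)--(A3)) and a higher-degree/prime-power correction of size $O_{\epsilon,P}([F:\Q])$; and both optimize over $\lambda$ at the end. Your bookkeeping of $Y^{1-\sigma}\geq 1-(\sigma-1)\log Y$, the algebraic simplification $n(1-\eta)b(\lambda)-P(1)/\lambda=(n-1)a(\lambda)-\eta n\,b(\lambda)$, the choice $Y_0=X_0+Mn$ in case (A2), and the collapse of $O([K:\Q])/(n-1)$ to $O([F:\Q])$ are exactly the steps the paper takes.
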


\subsection{Proof of \cref{thm:1} from \cref{thm:MainTheorem}} Without loss, assume $\epsilon \in (0, \tfrac{1}{8})$.  Taking $Y = Y_0$ and rescaling $\epsilon > 0$ appropriately in \cref{thm:MainTheorem} yields
\[
A \log X \leq \frac{(\tfrac{1}{4}+\epsilon) \log D_K}{(n-1)} + 3 P(1)  \log Y_0 + O_{\epsilon,P}([F:\Q]).
\] 
By considering cases arising from \eqref{eqn:Y_0} and fixing $\epsilon$ and $P$, this yields the desired bound for $X$ in all cases. Moreover, if $P(x) = x+x^2$ and $\lambda > 0$ then
\begin{equation*}
\begin{aligned}
	A(n,P) \geq \dfrac{2 - \frac{n}{n-1} e^{-\lambda}(2 + \lambda)}{\lambda}  
		& \geq -\frac{2}{(n-1)\lambda}  + \frac{n}{n-1}- \frac{n \lambda^2}{6(n-1)} \\
		& = \frac{n}{n-1}\Big( 1 - \frac{2}{n\lambda} - \frac{\lambda^2}{6}\Big) \\
		& = \frac{n}{n-1} \Big(1 - \frac{6^{2/3}}{2n^{2/3}} \Big) \geq 1 - \frac{2}{n^{2/3}},
\end{aligned}	
\end{equation*}
upon setting $\lambda = \sqrt[3]{6/n}$. \hfill \qed
\subsection{Proof of \cref{thm:MainTheorem}}
Let $0 < \lambda < \lambda(\epsilon,P)$ where $\lambda(\epsilon,P)$ is some sufficiently large constant and let $\sigma = 1 + \frac{\lambda}{\log X}$. One can verify $A = A(n,P) \geq A(2,P) \gg_P 1$ and $A \ll P(1)$ from \eqref{def:A}. Thus, by \eqref{eqn:MainTheorem}, we may assume without loss that $X \geq e^{\lambda(\epsilon,P)/\epsilon}$ and $X \geq D_K^{1/4(n-1)}$. This implies that $1 < \sigma < 1+ \min\{ \epsilon, \frac{4 \lambda(\epsilon,P) [K:\Q]}{\log D_K} \}$. Now, letting $\kD_{K/F}$ be the relative different of $K/F$, consider
\begin{equation}
\begin{aligned}
S & := \sum_{\substack{Y < \N^K_{\Q} \kN \leq X \\ (\kN,\kD_{K/F}) = 1}} \frac{\Lambda_K(\kN)}{\N^K_{\Q}\kN^{\sigma}} \sum_{k=1}^d a_k \frac{( (\sigma-1) \log \N^K_{\Q}\kN)^{k-1}}{(k-1)!}.
\end{aligned}
\end{equation}
By the positivity of the terms and \cref{prop:Polynomial_Explicit_Inequality}, it follows that
\begin{equation}
S \leq \cS(\sigma; P) \leq \dfrac{P(1)}{\sigma-1} +  (\tfrac{1}{4}+ \tfrac{\epsilon}{2}) \log D_K + O_{\epsilon,P}([K:\Q]).
\label{eqn:LNSP_upper_bound}
\end{equation}
On the other hand, by any of (A1), (A2), or (A3), each unramified prime ideal of $F$ splits completely into $[K:F]$ prime ideals. Hence, denoting $\kd_{K/F} = \N^K_F\kD_{K/F}$, we have that
\begin{equation}
\label{eqn:SplitAssumption}
S
 =  [K:F] \sum_{\substack{ Y < \N^F_{\Q}\kn \leq X \\ (\kn, \kd_{K/F}) = 1}} \dfrac{\Lambda_{F}(\kn)}{\N^F_{\Q}\kn^{\sigma}} \sum_{k=1}^d a_k \frac{( (\sigma-1) \log \N^F_{\Q}\kn)^{k-1}}{(k-1)!} \geq [K:F] \sum_{k=1}^d a_k  (S_k - R_k - T_k),
\end{equation}
where 
\begin{equation*}
\begin{aligned}
S_k & = \dfrac{(\sigma-1)^{k-1}}{(k-1)!}  \sum_{Y < \N^F_{\Q}\kn \leq X} \dfrac{\Lambda_F(\kn)}{\N^F_{\Q}\kn^{\sigma}} ( \log \N^F_{\Q}\kn)^{k-1}, \\
R_k & = \dfrac{(\sigma-1)^{k-1}}{(k-1)!}  \sum_{\substack{Y < \N^F_{\Q}\kn \leq X \\ (\kn, \kd_{K/F}) \neq 1}} \dfrac{\Lambda_F(\kn)}{\N^F_{\Q}\kn^{\sigma}} ( \log \N^F_{\Q}\kn)^{k-1}, \\
T_k & = \dfrac{(\sigma-1)^{k-1}}{(k-1)!}  \sumP_{Y < \N^F_{\Q}\kn \leq X } \dfrac{\Lambda_F(\kn)}{\N^F_{\Q}\kn^{\sigma}} ( \log \N^F_{\Q}\kn)^{k-1}. \\
\end{aligned}	
\end{equation*}
Here $\sum'$ indicates a restriction to ideals $\kn = \kp^j$ where $\kp$ is of degree $\geq 2$ over $\Q$ and $j \geq 1$. We estimate each $S_k$ using \cref{lem:LogPowerPrimeSum} with $\eta = \eta(\epsilon,P)$ sufficiently small to deduce that
\[
\sum_{k=1}^d a_k S_k 
 \geq \dfrac{1- \eta}{\sigma-1} \sum_{k=1}^d a_k (Y^{1-\sigma}  -X^{1-\sigma} E_{k-1}((\sigma-1)\log X) )  + O_P(1). 
\]
Since $X \geq Y$, $\sigma = 1 + \frac{\lambda}{\log X}$, and $e^{-t} \geq 1 - t$ for $t > 0$, we have that $Y^{1-\sigma} \geq 1 - (\sigma-1) \log Y$. The above equation therefore implies that
\begin{equation}
\frac{1}{1-\eta}  \sum_{k=1}^d a_k S_k  \geq  \Big( \frac{ P(1) - e^{-\lambda} \sum_{k=1}^d a_k E_{k-1}(\lambda)}{\lambda}  \Big) \log X  - P(1) \log Y + O_{P}(1)  
\label{eqn:SplitPrime_Contribution}
\end{equation}
To estimate $R_k$, we claim that 
\begin{equation}
\sum_{k=1}^d a_k R_k \leq \frac{\epsilon}{2[K:F]} \log D_K + O_{\epsilon,P}([F:\Q]). 
\label{eqn:RamifiedPrimes_Contribution}
\end{equation}
We divide into cases according to assumptions (A1), (A2), and (A3).  
\begin{itemize}
	\item If (A1) holds then $R_k = 0$ for all $k$ which trivially yields the claim. 
	\item If (A2) holds then, as $\lambda < \lambda(\epsilon,P)$ and $\sigma = 1 + \frac{\lambda}{\log X}$,
	\[
	\sum_{k=1}^d a_kR_k \ll_{\epsilon,P} \sum_{\substack{Y < \N^F_{\Q}\kn \leq X \\ (\kn, \kd_{K/F}) \neq 1}} \dfrac{\Lambda_F(\kn)}{\N^F_{\Q}\kn^{\sigma}} \ll_{\epsilon,P} \sum_{\substack{\N^F_{\Q}\kp > Y \\ \kp \mid \kd_{K/F}}} \dfrac{\log \N^F_{\Q}\kp}{\N^F_{\Q}\kp}.
	\]
	Since $Y \geq Y_0 \geq M[K:F]$ from \eqref{eqn:Y_0} and $M = M(\epsilon,P)$ is sufficiently large, it follows by \cref{lem:RamifiedPrimes} that
	\[
	\sum_{k=1}^d a_kR_k \leq \frac{\epsilon}{2[K:F]} \log D_K. 
	\]
	\item If (A3) holds then we argue as above and apply \cref{lem:RamifiedPrimes} in the $K/F$ Galois case to deduce that
	\[
	\sum_{k=1}^d a_k R_k \ll_{\epsilon,P} \sqrt{\frac{2[F:\Q]}{[K:F]} \log D_K}.
	\]
	By AM-GM, claim  \eqref{eqn:RamifiedPrimes_Contribution} follows. 
\end{itemize}
This proves the claim in all cases. Finally, to estimate $T_k$, we similarly observe that
\[
\sum_{k=1}^d a_k T_k \ll_{\epsilon,P} \sumP_{Y < \N^F_{\Q}\kp \leq X} \dfrac{\log \N^F_{\Q} \kp}{\N^F_{\Q}\kp^{\sigma}} \ll_{\epsilon,P} [F:\Q] \sum_{p} \dfrac{\log p}{p^{2\sigma}} \ll_{\epsilon,P} [F:\Q]. 
\]
Combining \eqref{eqn:LNSP_upper_bound}, \eqref{eqn:SplitAssumption}, \eqref{eqn:SplitPrime_Contribution},  \eqref{eqn:RamifiedPrimes_Contribution}, and the above, it follows that
\begin{equation}
(n-1) a(\lambda) \log X -   \eta n  b(\lambda) \log X \leq (\tfrac{1}{4}+\epsilon) \log D_K + n P(1) \log Y + O_{\epsilon,P}([K:\Q]),
\label{eqn:penultimate}
\end{equation}
where $n = [K:F]$,
\begin{equation*}
\begin{aligned}
a(\lambda) = a(\lambda; n, P) & =   \Big( \frac{ P(1) - \frac{n}{n-1}e^{-\lambda} \sum_{k=1}^d a_k E_{k-1}(\lambda)}{\lambda}  \Big), \\
b(\lambda) = b(\lambda; P) & = \Big( \frac{ P(1) - e^{-\lambda} \sum_{k=1}^d a_k E_{k-1}(\lambda)}{\lambda}  \Big). 
\end{aligned}
\end{equation*}
One can verify that the supremum of $b(\lambda)$ over $\lambda > 0$ exists and $A = A(n,P) = \displaystyle \sup_{\lambda > 0} a(\lambda)$ is bounded independent of $n$.  By taking $\eta = \eta(\epsilon,P)$ sufficiently small, we may therefore assume that  $\eta b(\lambda) < \tfrac{n-1}{n} \epsilon A$. Hence, \eqref{eqn:penultimate} implies
\[
(n-1) [ a(\lambda) - \epsilon A] \log X \leq (\tfrac{1}{4}+\epsilon) \log D_K + n P(1) \log Y + O_{\epsilon,P}([K:\Q]).
\]
Dividing both sides by $n-1$ and taking the supremum over $0 < \lambda < \lambda(\epsilon,P)$ yields the desired result, except for the range of $\lambda$ in definition of $A$. By straightforward calculus arguments, the supremum of $a(\lambda)$ occurs at $\lambda = \lambda_{n,P} > 0$ and one can verify that $\lambda_{n,P}$ is bounded above independent of $n$. Hence, for $\lambda(\epsilon, P)$ sufficiently large,
\[
\sup_{0 < \lambda < \lambda(\epsilon,P)} a(\lambda) = \sup_{\lambda > 0} a(\lambda) = A. 
\] 
This completes the proof.   \hfill \qed 

\section{Admissible polynomials with large values}
\label{sec:AdmissiblePolys}
Here we outline the computation of admissible polynomials $P(x)$ such that $P(1)$ is large which leads to large values for $A(n,P)$ in \cref{thm:1}. The key lemma for our calculations follows from arguments in \cite[Section 4]{HBLinnik} based on the maximum modulus principle.

\begin{lem}[Heath-Brown]
	\label{lem:AdmissiblePoly}
	A polynomial $P(x) \in \R_{\geq 0}[x]$ satisfying $P(0) = 0$ and $P'(0) = 1$ is admissible provided
	\[
	\Re\Big\{P\Big( \frac{1}{1+iy} \Big)\Big\} \geq 0 \qquad \text{for all $y \geq 0$.}
	\]
\end{lem}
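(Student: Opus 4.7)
The strategy is to turn the half-plane condition for $P(1/z)$ into a condition for $P(w)$ on a bounded disk, and then apply the minimum principle for harmonic functions.

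First, I would compute the image of the closed half-plane $H = \{z \in \C : \Re z \geq 1\}$ under the involution $w = 1/z$. A direct calculation with $z = 1 + iy$ gives
\[
\Big|\tfrac{1}{1+iy} - \tfrac12\Big|^2 = \tfrac{(1-y^2)^2 + 4y^2}{4(1+y^2)^2} = \tfrac{1}{4},
\]
so the line $\Re z = 1$ maps onto the circle $C = \{w : |w - \tfrac12| = \tfrac12\}$. Since $1/z$ is a Möbius transformation sending $\infty$ to $0$ (with $P(0)=0$) and $1$ to $1$, the interior $\Re z > 1$ maps onto the open disk $D = \{w : |w - \tfrac12| < \tfrac12\}$. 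Thus it suffices to prove $\Re P(w) \geq 0$ on $\bar D$.

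Next, I would upgrade the hypothesis from $y \geq 0$ to all $y \in \R$, hence to the whole circle $C$. Since $P$ has real coefficients, $P(\bar{w}) = \overline{P(w)}$, and $\overline{1/(1+iy)} = 1/(1-iy)$, so for $y \geq 0$,
\[
\Re P\bigl(\tfrac{1}{1-iy}\bigr) = \Re P\bigl(\tfrac{1}{1+iy}\bigr) \geq 0.
\]
Therefore $\Re P(w) \geq 0$ for every $w \in C$.

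Finally, $P$ is a polynomial, so $\Re P$ is harmonic on $\C$ and in particular on $D$, and continuous on $\bar D$. The minimum principle for harmonic functions on a bounded domain states that $\Re P$ attains its minimum on $\bar D$ on the boundary $C$. Since that minimum is $\geq 0$ by the previous step, we get $\Re P(w) \geq 0$ on all of $\bar D$. Translating back via $w = 1/z$ yields $\Re\{P(1/z)\} \geq 0$ for $\Re z \geq 1$, which is exactly condition \eqref{def:AdmissiblePoly}; together with the standing assumptions $P(0)=0$, $P'(0)=1$, and $P \in \R_{\geq 0}[x]$, this means $P$ is admissible.

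There is no real obstacle: the only point requiring care is the reality-of-coefficients step that extends the hypothesis from the lower half of $C$ (the image of $y \geq 0$) to all of $C$. The key conceptual move is simply noticing that $1/z$ sends the unbounded region $\Re z \geq 1$ to a bounded disk, which lets us avoid any Phragmén--Lindelöf argument and apply the elementary minimum principle directly.
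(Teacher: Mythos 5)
Your proof is correct and is, in substance, the same argument the paper refers to when it cites Heath-Brown's ``maximum modulus principle'' reasoning: you map the half-plane $\Re z\ge 1$ via $w=1/z$ to the disk $\lvert w-\tfrac12\rvert\le\tfrac12$, use the real coefficients to extend the hypothesis from the image of $\{y\ge0\}$ (the lower semicircle) to the whole boundary circle, and then apply the minimum principle to the harmonic function $\Re P$ on the disk — which is precisely the maximum modulus principle applied to the entire function $e^{-P}$. No gap; the calculation $\lvert 1/(1+iy)-\tfrac12\rvert=\tfrac12$ and the conjugation step $\Re P(\bar w)=\Re P(w)$ are both correct.
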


For each integer $d \geq 1$, write $P(x) = \sum_{k=1}^d a_k x^k$ where $a_k \geq 0$ and $a_1 = 1$. We wish to  determine $a_2,\dots,a_d$ such that $P(1) = 1 + a_2 + \dots + a_d$ is maximum. From \cref{lem:AdmissiblePoly}, it suffices to verify that for all $y \geq 0$, 
\[
\sum_{k=1}^d a_k\dfrac{ \Re\{ (1-iy)^k \} }{(1+y^2)^k} \geq 0,
\text{ or equivalently, }
\sum_{k=1}^d a_k (1+y^2)^{d-k} \sum_{0 \leq j \leq k/2} (-1)^j {k \choose 2j} y^{2j} \geq 0.
\]
Expanding the above as a polynomial in $y$, let $\mathbf{a} = (a_1,a_2,a_3,\dots,a_d)$ and $C_{2j}^{(d)} = C_{2j}^{(d)}(\mathbf{a})$ be the coefficient of $y^{2j}$ for $0 \leq j \leq d-1$; all other coefficients are zero. As $a_1 = 1$, one can see that $C_0^{(d)} = 1 + a_2 + \dots + a_d = P(1)$. Therefore, $P(x)$ is admissible if the remaining $d-1$ coefficients $C_{2j}^{(d)}$ for $1 \leq j \leq d-1$ are non-negative. Notice $C_{2j}^{(d)}$ are linear expressions in $a_2,\dots,a_d$. Thus, one may apply the simplex method to maximize the objective function $P(1) = 1 + a_2 + \dots + a_d$ given the system of linear inequalities $\{ C_{2j}^{(d)}(\mathbf{a}) \geq 0\}_{j=1}^{d-1} \cup \{ a_j \geq 0\}_{j=2}^{d-1}$. Based on computational evidence for $1 \leq d \leq 100$, the maximum of this linear system occurs precisely when $C_{2j}^{(d)}(\mathbf{a}) = 0$ for all $1 \leq j \leq d-1$. We suspect this scenario is always the case, but we did not seriously investigate it as that is not our aim. 

\begin{table}
\begin{tabular}{l|c|c|c|c} 
$n$ & $4 A(n, P_{100}) \geq $ & $\lambda = \lambda(n, P_{100}) $ & $4A(n, P_1) \geq$ & $\lambda = \lambda(n, P_1)$ \\
\hline
 2 &  2.444 &  21.68 &  1.493 &  1.678 \\ 
 3 &  2.734 &  17.63 &  1.827 &  1.189 \\ 
 4 &  2.904 &  15.50 &  2.039 &  .9613 \\ 
 5 &  3.021 &  14.11 &  2.193 &  .8244 \\ 
 6 &  3.108 &  13.10 &  2.311 &  .7310 \\ 
 7 &  3.176 &  12.33 &  2.406 &  .6624 \\ 
 8 &  3.231 &  11.70 &  2.485 &  .6094 \\ 
 9 &  3.277 &  11.19 &  2.553 &  .5669 \\ 
 10 &  3.316 &  10.75 &  2.611 &  .5318 \\ 
 20 &  3.530 &  8.340 &  2.951 &  .3554 \\ 
 50 &  3.720 &  6.043 &  3.293 &  .2147 \\ 
 100 &  3.814 &  4.763 &  3.483 &  .1486 \\ 
 200 &  3.878 &  3.764 &  3.625 &  .1035 \\ 
 500 &  3.931 &  2.764 &  3.757 &  .0646 \\ 
 1000 &  3.956 &  2.191 &  3.826 &  .0454 \\ 
 2000 &  3.971 &  1.737 &  3.876 &  .0319 \\ 
 5000 &  3.984 &  1.279 &  3.921 &  .0201 \\ 
 10000 &  3.990 &  1.015 &  3.944 &  .0142 \\ 
\end{tabular}
\caption{Values of $A = A(n,P_d)$ when $d=100$ versus $d=1$.}
\label{table:A_values}
\end{table}
\begin{figure}
	\includegraphics[scale=0.65]{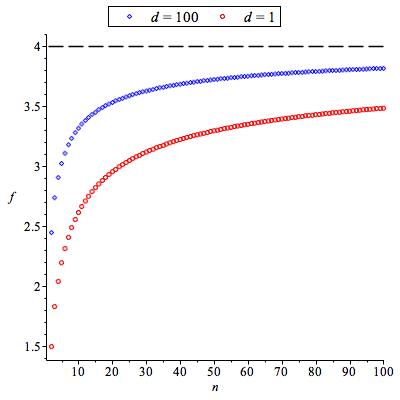}
	\caption{Plot of $f(n) = 4A(n,P_d)$ for $2 \leq n \leq 100$ with $d=1$ (\textcolor{red}{red circles}) below and $d=100$ (\textcolor{blue}{blue diamonds}) above.}
	\label{fig:A_values}
\end{figure}

	Thus, for each integer $d \geq 1$, let $P_d(x)$ be the polynomial associated to the unique solution $\mathbf{a}$ (if it exists) satisfying $C_{2j}^{(d)}(\mathbf{a}) =0$ for $1 \leq j \leq d-1$. For example,  
\[
P_1(x) = x, \quad P_2(x) = x +x^2, \quad P_3(x) = x + x^2 + \tfrac{2}{3} x^3, \quad P_4(x) = x + x^2 + \tfrac{4}{5}x^3 + \tfrac{2}{5}x^4.
\]
These are the same polynomials exhibited in \cite[Section 4]{HBLinnik}. 
Estimate \eqref{eqn:XLi} is based on the choice of $P_1(x) = x$. Setting $d = 100$, we may compute $P_{100}(x)$ and subsequently $A(n,P_{100})$ in \cref{table:A_values} for fixed values of $n$. One can compare $A(n,P_{100})$ with $A(n,P_1)$ in \cref{table:A_values,fig:A_values} above to observe the savings afforded by \cref{cor:2} over \eqref{eqn:XLi}.

\bibliographystyle{alpha}
\bibliography{LNSP}

\begin{thebibliography}{Zam17}

\bibitem[Ank52]{Ankeny}
N.~C. Ankeny.
\newblock The least quadratic non residue.
\newblock {\em Ann. of Math. (2)}, 55:65--72, 1952.

\bibitem[Bur57]{Burgess_57}
D.~A. Burgess.
\newblock The distribution of quadratic residues and non-residues.
\newblock {\em Mathematika}, 4:106--112, 1957.

\bibitem[Bur62]{Burgess_62}
D.~A. Burgess.
\newblock On character sums and {$L$}-series.
\newblock {\em Proc. London Math. Soc. (3)}, 12:193--206, 1962.

\bibitem[HB92]{HBLinnik}
D.~R. Heath-Brown.
\newblock Zero-free regions for {D}irichlet {$L$}-functions, and the least
  prime in an arithmetic progression.
\newblock {\em Proc. London Math. Soc. (3)}, 64(2):265--338, 1992.

\bibitem[KN12]{KadiriNg}
H.~Kadiri and N.~Ng.
\newblock Explicit zero density theorems for {D}edekind zeta functions.
\newblock {\em J. Number Theory}, 132(4):748--775, 2012.

\bibitem[Li12]{XLi}
X.~Li.
\newblock The smallest prime that does not split completely in a number field.
\newblock {\em Algebra Number Theory}, 6(6):1061--1096, 2012.

\bibitem[LO77]{LO}
J.~C. Lagarias and A.~M. Odlyzko.
\newblock Effective versions of the {C}hebotarev density theorem.
\newblock In {\em Algebraic number fields: {$L$}-functions and {G}alois
  properties ({P}roc. {S}ympos., {U}niv. {D}urham, {D}urham, 1975)}, pages
  409--464. Academic Press, London, 1977.

\bibitem[MP15]{MurtyPatankar}
V.~K. Murty and V.~M. Patankar.
\newblock Tate cycles on {A}belian varieties with complex multiplication.
\newblock {\em Canad. J. Math.}, 67(1):198--213, 2015.

\bibitem[Mur94]{VKM2}
V.~K. Murty.
\newblock The least prime which does not split completely.
\newblock {\em Forum Math.}, 6(5):555--565, 1994.

\bibitem[Ser81]{Ser1}
J.-P. Serre.
\newblock Quelques applications du th\'eor\`eme de densit\'e de {C}hebotarev.
\newblock {\em Inst. Hautes \'Etudes Sci. Publ. Math.}, (54):323--401, 1981.

\bibitem[Sta74]{Stark}
H.~M. Stark.
\newblock Some effective cases of the {B}rauer-{S}iegel theorem.
\newblock {\em Invent. Math.}, 23:135--152, 1974.

\bibitem[TZ17]{ThornerZaman_2016a}
J.~Thorner and A.~Zaman.
\newblock An explicit bound for the least prime ideal in the {C}hebotarev
  density theorem.
\newblock {\em Algebra Number Theory}, 2017.
\newblock accepted, \href{http://arxiv.org/abs/1604.01750}{{arXiv/1604.01750}}.

\bibitem[VV00]{Vaaler-Voloch}
J.~D. Vaaler and J.~F. Voloch.
\newblock The least nonsplit prime in {G}alois extensions of {${\bf Q}$}.
\newblock {\em J. Number Theory}, 85(2):320--335, 2000.

\bibitem[Zam16]{Zaman_2015a}
A.~Zaman.
\newblock Explicit estimates for the zeros of {H}ecke {$L$}-functions.
\newblock {\em J. Number Theory}, 162:312--375, 2016.

\bibitem[Zam17]{Zaman_Thesis}
A~Zaman.
\newblock {\em Analytic estimates for the Chebotarev Density Theorem and their
  applications}.
\newblock PhD thesis, University of Toronto, 2017.

\end{thebibliography}

\end{document}